\documentclass[fleqn,final,5p,times,twocolumn]{elsarticle}

\journal{Systems \& Control Letters}


\usepackage{graphics}
\usepackage{subfigure}
\usepackage{balance}
\usepackage{amsmath,amssymb,amsfonts}
\allowdisplaybreaks
\usepackage{hyperref}
\usepackage{cleveref}
\hypersetup{pdfauthor=author}
\usepackage{color}
\usepackage{float}
\usepackage{mathrsfs}  
\usepackage{amsmath}
\usepackage{amsthm}
\usepackage{subfigure}
\usepackage{graphicx}
\usepackage{epstopdf}
\usepackage[all]{xy}
\usepackage{latexsym,amscd}
\usepackage{url}
\usepackage{ulem}
\usepackage{cancel}
\usepackage{dsfont}


\vfuzz15pt 
\hfuzz15pt 
\hbadness=60000 
\vbadness=60000 


\newtheorem{theorem}{Theorem}[section]

\newtheorem{proposition}[theorem]{Proposition}
 
\theoremstyle{definition}  
\newtheorem{definition}[theorem]{Definition}

\newtheorem{remark}[theorem]{Remark}

\newtheorem{Lemma}[theorem]{Lemma}

\newtheorem{Example}[theorem]{Example}
\newtheorem{Remark}[theorem]{Remark}


\newcommand{\F}{{\mathcal{F}}}
\newcommand{\p}{\mathrm{p}}
\newcommand{\on}{\operatorname}   
\newcommand{\R}{{\mathbb{R}}}

\begin{document}
\begin{frontmatter}
\title{Realization Theory Of Recurrent Neural ODEs Using Polynomial System Embeddings}
\author[1]{Martin Gonzalez\corref{author}}
\address[1]{Institut de Recherche Technologique SystemX, Palaiseau, France.}
\cortext[author] {Corresponding author. \textit{E-mail address:} martin.gonzalez@irt-systemx.fr}
\author[3]{Thibault Defourneau \fnref{f3}}
\address[3]{Trinov, 75001 Paris, France. }
\fntext[f3]{The work presented in this paper was carried out while 
T. Defourneau was employed by Centre de Recherche en Informatique, Signal et Automatique de Lille.}
\author[1]{Hatem Hajri}
\author[2]{Mihaly Petreczky}
\address[2]{Centre de Recherche en Informatique, Signal et Automatique de Lille, UMR CNRS 9189,  France.}
\begin{abstract}
In this paper we show that neural ODE analogs of recurrent (ODE-RNN) and
Long Short-Term Memory (ODE-LSTM) networks can be algorithmically embedded into the class of 
polynomial systems. This embedding preserves input-output
behavior and can suitably be extended to other neural DE architectures. We then use realization theory of
polynomial systems to provide necessary conditions for an input-output map to be realizable by an 
ODE-LSTM and sufficient conditions for minimality of such systems.
These results represent the first steps towards realization theory of recurrent neural ODE 
architectures, which is is expected be useful for model reduction and learning 
algorithm analysis of recurrent neural ODEs. 
\end{abstract}

\begin{keyword}
Realization theory,  Neural ODEs, Recurrent Neural Networks, 
Long Short-Term Memory, System Identification.
\end{keyword}
\end{frontmatter}

\section*{Introduction}
Long Short-Term Memory networks (LSTMs) represent a generalization of 
recurrent neural networks (RNNs) widely used in text analysis tasks such as grammar correction and 
next word prediction.  They were introduced \cite{LSTMorigine}, and have been studied intensively 
ever since, due to their effectiveness for learning long-term dependencies in comparison with RNN 
algorithms and its variants (see \cite{RNNsproblems1} and \cite{RNNsproblems2} for details). 

More recently, the introduction of neural ODEs \cite{chen1} and other implicit network architectures \cite{deq,sdeq,ruba, Kidg1,Biau,sde,bsde,pde} 
has opened the door to new machine learning paradigms tightly related to dynamical system modeling 
techniques.  For instance, neural ODEs can be seen as  \emph{continuous-time} dynamical systems and 
as infinitesimally connected ResNets while the latter can be seen as Euler discretizations of 
neural ODEs.  As such, they exploit a rich available theory on both sides, offering memory efficiency 
whilst their recurrent analogs have the ability of handling irregular data and are suitable for 
tackling generative problems and time series (particularly in physics),  becoming relevant to both 
modern machine learning and traditional mathematical modeling. 

In this paper we make the first steps towards developing realization theory of recurrent neural ODE architectures.  We focus on the present article on neural ODE analogs of RNNs and LSTMs and we aim at characterizing those input-output maps which can be represented by these systems and understanding 
the minimal size of such systems sufficient to be able to represent a given input-output map. 

The motivation for studying realization theory for neural ODEs is that learning algorithms 
for such systems from data correspond to system identification 
algorithms. Realization theory is central in system identification as it can be viewed as an 
attempt to solve a system identification problem through idealized qualitative analysis, where 
there is infinite data and no modelling error.
For linear systems, realization theory \cite{kailath,LindquistBook} allowed to address 
identifiability, canonical forms and gave rise to subspace identification algorithms. 

In order to make the discussion more precise, 
let us determine an ambient class of dynamical systems containing ODE-RNNs,  
ODE-LSTMs\footnote{These will be specified by a general class of two-scale ODE-based RNNs where multiple continuous-time memory architectures arise.} and polynomial systems as some of its subclasses, and 
having all desired properties needed for our study. 
For a subset $Z\subset \R^k$, $k \geq 1$, we denote: 
\begin{itemize}
\item $\mathcal{Z}$ 
(resp. $\mathcal{Z}_{\on{pc}}, \mathcal{Z}_{\on{ac}}$ ) 
the set of continuous (resp. piecewise-continuous, resp. absolutely continuous  
) functions from $[0; + \infty[$ to $Z$. Denote by $\mathcal{Z}_{\on{pco}}$ the set of all 
piecewise-constant 
functions from $[0,+\infty[$ to $Z$ which are constant starting from a certain point, i.e., $h \in \mathcal{Z}_{\on{pco}}$, 
if $h$ is piecewise-constant and there exists $T_h \ge 0$ such that the restriction of $h$ 
to $[T_h,+\infty[$ is constant. 
\item $\mathcal{Z}^f$  the set of functions from $[0, T]$ to $Z$, for some 
$T>0$.  
We let the reader combine this with the above notations.
\end{itemize}
The ambient class $\mathcal{F}$ of dynamical systems we will 
consider in this paper is described by differential equations of the form  
\begin{equation}\label{DNs}
\left\lbrace \begin{array}{lll} 
\dot{x}(t) =f\big(x(t),u(t) \big) \\
y(t) = g\big( x(t)\big) \end{array} \right.  \quad t\geq 0,
\end{equation}
with initial condition $x(0) = x_0 \in \R^n$, where
\begin{itemize}
\item $(x,u,y)\in\mathcal{X}_{ac} \times\mathcal{U}_{pco} \times \mathcal{S}_{pc}$
\footnote{Allowing $u\in \mathcal{U}_{pc}$ represents a relatively small technical 
difficulty that we avoid here for clarity.}, where 
$X=\R^n,S=\R^{\mathrm{p}}$ denote respectively the state and output 
spaces 
and $U=\{\alpha_1,\ldots,\alpha_K\} \subset \R^m $ is a finite 
input space with cardinality $K$;
\item $f: X \times U \rightarrow X$ is analytic on its first
argument and $g:X \rightarrow S$ is analytic.
\end{itemize}

We will identify these systems with tuples of the form $\Sigma = (f,g, x_0)$. 
The triple $(m,n,\mathrm{p})$ will be called the format of $\Sigma$.


On the one hand,  polynomial systems are a subclass of such systems and many 
methods of computational algebra can be used to determine qualitative 
properties of such systems such as  observability, reachability and minimality. 
On the other hand,  one can think of ODE-RNNs and ODE-LSTMs as subclasses of $\mathcal{F}$ 
which can be parameterized according to some class of learning weight functions $\theta(t)$, 
which will be assumed to be constant for simplicity.  As such, under mild assumptions, 
we can associate polynomial 
systems to large classes of ODE-RNNs
and ODE-LSTMs and,  by doing so,  infer such 
qualitative properties on these classes.  More specifically,
\begin{itemize}
\item We show that an i-o map can be realized by an ODE-RNN or an ODE-LSTM, only if 
it can be realized by a polynomial system, i.e. a non-linear system defined by 
vector fields and readout maps which are polynomials. We present an explicit 
algorithmic construction of such a polynomial system.
\item We infer sufficient conditions for minimality/observability/reachability/accessibility of 
ODE-RNNs and ODE-LSTMs from the properties of their associated polynomial systems
 \cite{JanaSIAM,Jana,Bartoszewicz,SontagWang}.
\item We present a necessary condition for existence of a realization by ODE-RNNs and ODE-LSTMs, 
using results from realization theory of polynomial systems. This necessary condition is a 
generalization of the well-known rank condition for Hankel matrices of linear systems.
\end{itemize}

Note that elements in $\F$ could be viewed as analytic systems for which there is an 
existing realization theory \cite{J,Isidori,NLCO}. 
However,  as analytical functions do not have a finite representation, this approach is not 
computationally effective: there are no algorithms for checking minimality, deciding 
equivalence of two systems neither transforming a system to a minimal one. Nevertheless, 
seen as polynomial systems,  computer algebra tools can be used to address these issues
\cite{JanaCDC2016}. In addition, since polynomial systems have much more algebraic 
structures than analytic systems and the conditions for  minimality/observability/reachability 
studied here are less restrictive than those which can be obtained by the analytic 
approach.

\textbf{Related work: }  To the best of our knowledge, the results of the paper are new.  
Observability, controllability and minimality of ODE-RNNs were investigated in \cite{ASo,QSc,ASm},
but no results on existence of a realization were provided, and the results of \cite{ASo,QSc,ASm}
used certain assumptions on the weights of the ODE-RNNs.
In contrast to \cite{ASo,QSc,ASm}, in this paper we consider ODE-LSTMs and we address the issue of existence of a
realization by ODE-LSTM. Moreover, the technique used in this paper is completely different  from
that of \cite{ASo,QSc,ASm}. 
Rational embeddings and elements of realization theory 
of a subclass of ODE-RNNs were considered in \cite{TMArxive}.  In comparison to
\cite{TMArxive}, the main novelty is that in this paper we consider both ODE-RNNs and ODE-LSTMs  and 
that detailed proofs and examples are provided. That is, the current paper extends the results of \cite{TMArxive}. 

\section{Preliminaries}

We denote $\R[X_1, \dots ,X_n]$ the algebra of real polynomials in $n$ 
variables and
denote $\R(X_1,\ldots,X_n)$ its quotient field, whose elements are rational 
functions in $n$ variables. 
If $R$ is an integral domain over $\R$ then the transcendence degree 
$\mathrm{trdeg} R$ of $R$ over $\R$ is defined as the transcendence 
degree over $\R$ of the field $F$ of fractions of $R$ and it equals the 
greatest number of algebraically independent elements of $F$ over $\R$.
Let $n,m$ be two integers and let $\R[X_1,\ldots,X_n;\R^m]$ be the set of 
tuples $(P_1,\ldots,P_m)$ whose $m$ components are polynomials in $n$ 
variables.  

Denote $\mathcal{C}^{\omega}(\R)$ the algebra of real analytic functions 
over $\R$ and  denote $\sigma^{(i)}$ the $i$-th derivative of 
$\sigma\in\mathcal{C}^{\omega}(\R)$. We denote 
$\mathcal{C}_1^{\omega}(\R)$ the subset of $\mathcal{C}^{\omega}(\R)$ of those
analytic functions satisfying 
\begin{equation}\label{assum1}
 \sigma^{(1)} = P(\sigma) \, , 
\end{equation}
for some $P \in \R[X]$. In this paper we will consider only activation functions of this sort.
In particular, the hyperbolic tangent and the logistic functions
\begin{equation*}
\forall x \in \R \, , \hspace*{2mm} th(x) = \frac{e^x - e^{-x}}{e^x + e^{-x}} \, , 
\hspace*{2mm} S(x) = \frac{1}{1 + e^{-x}} \, 
\end{equation*}
are both elements of $ \mathcal{C}_1^{\omega}(\R)$ as they are the unique solutions of the 
differential equations
$$
th'(x)=1-th^2(x) \qquad S'(x)=S(x)-S^2(x)
$$
with initial conditions $th(0)=0$ and $S(0)= \frac{1}{2}$.

For a map $\sigma:\R \rightarrow \R$ denote 
$\overrightarrow{\sigma}: \R^n \rightarrow \R^n$ the map defined by 
\begin{equation}\label{diags}
\overrightarrow{\sigma}: (x_1, \ldots, x_n)^T \mapsto (\sigma(x_1), \ldots, 
\sigma(x_n))^T 
\end{equation}

Let $\odot$ be the Hadamard product $(A \odot B)_{ij} := (A)_{ij} (B)_{ij}$ 
where $A,B$ matrices of same dimension.  In particular,  for $P,Q,R \in \R^n$,  
the expression $P \odot Q= R$ is equivalent to $P_i Q_i= R_i$,  for
$i\in[n]:=\{1,\ldots,n\}$.

\subsection{Polynomial systems}

Define the subclass $\mathcal{F}_0$ of $\mathcal{F}$ consisting of those 
systems described by 
\begin{equation} \label{rational_system_R(Sigma)}
 \left\lbrace\begin{array}{ll}
\dot{x}(t) = P_{u(t)}(x(t)) \, \\[2mm]
y(t)  = g(x(t)) \, ,
\end{array} \right.\quad t\geq 0,
\end{equation}
with initial condition $x(0) = x_0\in \R^n$, where 
$P_{u(t)}\in \R[X_1,\ldots,X_n;\R^{n}]$ for $u(t) \in U$ and 
$g\in  \R[X_1,\ldots,X_n;\R^{\p}]$.
These are polynomial systems and will be identified with tuples 
$\mathscr{P} = (\{P_{u(t)}\}_{u(t) \in U},g, x_0)$ and have at most one solution $(x,u,y)\in\mathcal{X}_{\on{ac}}\times\mathcal{U}_{\on{pco}}
\times \mathcal{S}_{\on{pc}}$ given an initial state $x_0$.

\begin{definition}\label{poly-systems}
We say that a polynomial system $\mathscr{P}\in \mathcal{F}_0$ is a  
polynomial embedding of a system $\Sigma\in \mathcal{F}$ if for any solution 
$(x,u,y)\in\mathcal{X}_{\on{ac}}\times\mathcal{U}_{\on{pco}}\times 
\mathcal{S}_{\on{pc}}$ of $\Sigma$, there is a continuous injection $F$ 
such that $(F(x),u,y)\in F(\mathcal{X}_{\on{ac}})\times\mathcal{U}_{\on{pco}}
\times \mathcal{S}_{\on{pc}}$ is a solution of $\mathscr{P}$. We will denote 
such systems by $\mathscr{P}(\Sigma)$.
\end{definition}

\subsection{The algebra of input-output maps}

In this section we introduce causal analytic i-o maps and verify that under some assumptions their observation algebras are well-defined.  For the latter, we make use of some technical definitions (see \cite[Definitions 4.2, 4.3]{Jana}), allowing us to define derivations and show that the ring of input-output maps forms an integral domain structure.

To this end, we remark that any element $u$ of $\mathcal{U}_{\on{pco}}$ is completely determined by 
determined by tuples $t_1,\ldots,t_l\in[0,T],  \alpha_1,\ldots,\alpha_l\in U$ for 
some $l\geq1$ such that:
$$
u_{t_1,\ldots,t_l}^{\alpha_1,\ldots,\alpha_l}(t)=\left\{\begin{array}{rl}
 \alpha_i & \mbox{ if } \; t \in [T_{i-1}, T_i[, \hspace*{1mm}  i\in[l] \\
 \alpha_l & \mbox{ if } \; t \ge T_l
 \end{array}\right.  
$$
for some interval decomposition 
$$
T_0=0 \, , \hspace*{1mm} T_i=\sum_{j=1}^{i} t_j, 
\hspace*{1mm} i\in [l] \quad T=T_l.
$$
Now, for $p:\mathcal{U}_{\on{pco}} \to \mathcal{S}_{\on{pc}}$, each component of 
$p(u):=(p_1(u),\ldots,p_{\on{p}}(u))$ is of the form 
$p_k(u_{t_1,\ldots,t_l}^{\alpha_1,\ldots,\alpha_l})$. 
This remark will be used to define the class of causal and analytic input-output maps. 
In turn, any input-output map realized by a system from $\mathcal{F}$ 
belongs to this latter class.
\begin{definition} 
A map $p: \mathcal{U}_{pco} \rightarrow \mathcal{S}_{pc}$ is 
\begin{itemize}
\item \emph{causal} if, $\forall u, v \in \mathcal{U}_{pco}, t \geqslant 0, $ we have:
$$ \left(u(s) = v(s), \forall s \in [0,t] \right) \Rightarrow  \left(p(u)(s) = p(v)(s), 
\forall s \in [0,t]\right).$$
\item \emph{analytic} if $\forall k \in [\p], \alpha_1,\ldots,
\alpha_l \in \mathcal{U},l > 0$, 
the following function is analytic 
\begin{eqnarray*}
\phi_{p,k,\alpha_1,\ldots,\alpha_l}: ([0,+\infty[)^l & \rightarrow &  \R \\
(t_1, \ldots, t_l) & \mapsto & 
p_k(u_{t_1,\ldots,t_l}^{\alpha_1,\ldots,\alpha_l})(T_l).
\end{eqnarray*}
\end{itemize}
Set $S^0=\R$.  Denote $\mathcal{A}(\mathcal{U}_{pco})$ the set of causal 
analytic maps 
$p:\mathcal{U}_{pco} \rightarrow \mathcal{S}^0_{pc}$. It is naturally a $\R$-
algebra. 
\end{definition}

For our purposes, we need to define a derivation operation  on $\mathcal{A}(\mathcal{U}_{pco})$. 
To this end, we will use the following observation: for each $\alpha\in U, t>0$ and 
$u\in\mathcal{U}_{\on{pco}}$, 
we can construct an element $u_{t,\alpha}\in \mathcal{U}_{\on{pco}}$ 
by setting
$$
u_{t,\alpha}(\tau)=\left\{\begin{array}{rl} 
                     u(\tau) & \tau \in [0,t[ \\
                     \alpha  & \tau \geq t. \\
     \end{array}\right.     
$$
Then for all $\alpha\in U$, we define the map
$$
D_{\alpha}: \mathcal{A}(\mathcal{U}_{pco}) \rightarrow  \mathcal{A}
(\mathcal{U}_{pco}) 
\quad \varphi  \mapsto D_{\alpha}(\varphi)
$$
given, for all $u\in \mathcal{U}_{pco}, t \geq 0$ by,
$$
\big( D_{\alpha} \varphi(u) \big)(t) = \frac{d}{ds} \Big( \varphi \big( u_{t,\alpha})
(t+s)\Big)_{| \, s = 0}.
$$
The map $D_{\alpha}$ is a well-defined derivation.

Next, we will argue that  $\mathcal{A}(\mathcal{U}_{pco})$ is an integral domain. 
To this end notice that the set $\mathcal{U}^f_{\on{pc}}$ of piecewise constant 
functions over finite intervals is closed by interval truncation, concatenation 
and piecewise time dilatation. Hence, $\mathcal{U}^f_{\on{pc}}$ is a set of admissible 
inputs in the sense of \cite[Definition 4.1]{Jana}. 
Consequently, we can use the definition of analytic functions in the sense of \cite[Definition 4.3]{Jana}.
Let us denote by $\mathcal{A}(\mathcal{U}^f_{\on{pc}},\R)$ the set of analytic 
functions $\mathcal{U}^f_{\on{pc}} \to \R$ in the sense of \cite[Definition 4.3]{Jana}.
From \cite[Theorem 4.4]{Jana} it follows that the ring 
$\mathcal{A}(\mathcal{U}^f_{\on{pc}},\R)$ is an integral domain. 
Below we will present an $\mathbb{R}$-algebra isomorphism between $\mathcal{A}(\mathcal{U}_{pco})$ and $\mathcal{A}(\mathcal{U}^f_{\on{pc}},\R)$. The existence of such an isomorphism then implies
that  $\mathcal{A}(\mathcal{U}_{pco})$ is also an integral domain.
In order to define this isomorphism, we observe that for each 
$v:[0,T_v]\to U$ in $\mathcal{U}^f_{\on{pc}}$ we can construct a unique 
element $u_{v}\in \mathcal{U}_{\on{pco}}$ by setting
$$
u_{v}(t)=\left\{\begin{array}{rl} 
                     v(t) & t \in [0,T_v[ \\
                     v(T_v)  & t \geq T_v. \\
     \end{array}\right.     
$$
Let us define the map 
\begin{eqnarray*}
 \ell:\mathcal{A}(\mathcal{U}_{pco}) & \longrightarrow &  \mathcal{A}
(\mathcal{U}^f_{\on{pc}},\R) \\
\Big(\varphi: \mathcal{U}_{pco} \to \mathcal{S}^0_{pc} \Big) & \longmapsto & 
\Big( \tilde{\varphi}:\mathcal{U}^f_{\on{pc}} \to \R \Big)
\end{eqnarray*}
given, for each function $v:[0,T_v]\to U$ in $\mathcal{U}^f_{\on{pc}}$,  by 
$\widetilde{\varphi}(v)=\varphi(u_v)(T_v)$. 

It then follows $\ell$ is an $\R$-algebra isomorphism and hennce  $\mathcal{A}(\mathcal{U}_{pco})$ is an
integral domain.

The above discussion allow us to introduce the following.

\begin{definition} \label{observation_algebra_field}
Let $p: \mathcal{U}_{pco} \rightarrow \mathcal{S}_{pc}$ be 
analytic and causal.  The \emph{observation algebra} $\mathcal{A}_{obs}(p)$ 
of $p$ is the smallest sub-algebra  
$\mathcal{A} \big( \mathcal{U}_{pco} \big)$ 
containing each $p_k \in \mathcal{A}_{obs}(p)$ and closed under 
$D_{\alpha}$, for all $\alpha\in U$. 
The field of fractions $\mathcal{Q}_{obs}(p)$ of $\mathcal{A}_{obs}(p)$ 
will be called observation field of $p$ and we denote 
$trdeg\mathcal{A}_{obs}(p)$ the transcendence degree 
of $\mathcal{A}_{obs}(p)$ over $\R$.
\end{definition}

\begin{definition}
Let $\Sigma \in \F$ be a system with initial state $x_0$.  
It is called a (piecewise constant) realisation of a map 
$p: \mathcal{U}_{\on{pco}} \rightarrow \mathcal{S}_{pc}$
if for all $u\in\mathcal{U}_{\on{pco}}$ the unique solution $(x,u,y)$ of 
$\Sigma$ such that $x(0)=x_0$
satisfies $p(u) \equiv y$.
\end{definition}

\begin{remark}
If a system $\Sigma$ realizes an i-o map 
$p: \mathcal{U}_{pco} \rightarrow \mathcal{S}_{pc}$, then the polynomial 
embedding $\mathscr{P}(\Sigma)$, when it exists, also realizes $p$.
\end{remark}

\subsection{Minimality, reachability and observability of polynomial systems} 

Let $\Sigma\in \F$ be a system of format $(m,n,\p)$ as in the above subsection.
The \emph{dimension} $dim(\Sigma)$ of $\Sigma$ is the dimension of its 
state-space.

A polynomial system $\mathscr{P}$ realizing an i-o map $p$ is \textit{minimal} if 
there is no polynomial system $\mathscr{P}^{'}$ realizing $p$ such that 
$\dim(\mathscr{P}^{'}) < \dim(\mathscr{P})$. 
Define the set of reachable states of a polynomial system $\mathscr{P}$ as:
\begin{equation*} 
 \mathrm{R}_{\mathscr{P}}(\upsilon_0) = \{ \upsilon(t) \; | \; t \geqslant 0, 
 (\upsilon,u,y) 
 \mbox{ is a solution of } \mathscr{P}, \upsilon(0)=\upsilon_0\} \, 
\end{equation*}
and recall from  \cite[Definition 4]{Bartoszewicz1} that its \emph{observation 
algebra} $\mathcal{A}_{obs}(\mathscr{P})$ is the smallest sub-algebra of the ring 
$R[X_1,\ldots,X_n]$ which contains $h_k$, $k\in[p]$
and which is closed under taking the formal Lie derivatives with respect to the 
formal vector fields
$f_{\alpha}=\sum_{i=1}^{n} P_{i,\alpha} \frac{\partial}{\partial X_i}$.  
Its fraction field $\mathcal{Q}_{obs}(\mathscr{P})$ will be called its observation field.
Finally, $\mathscr{P}$ is minimal if  $\dim(\mathscr{P})= \mathrm{trdeg} 
\mathcal{A}_{obs}(p)$ (see \cite[Lemma 1, Theorem 4]{JanaVS} for details).  
Notice that the other implication is true for rational systems, but not for polynomial ones.
A polynomial system $\mathscr{P}$ is
\begin{itemize}
\item \emph{algebraically reachable}, if there is no non-trivial polynomial which is 
zero on
$\mathrm{R}_{\mathscr{P}}(\upsilon_0)$;
\item \textit{accessible}, if $\mathrm{R}_{\mathscr{P}}(\upsilon_0)$ contains an 
open subset of $\R^n$;
\item  \emph{algebraically observable}, if 
$\mathcal{A}_{obs}(\mathscr{P}) = R[X_1,\ldots,X_n]$ (see \cite{Jana});
\item \emph{semi-algebraically observable} if $\mathrm{trdeg}(\mathcal{A}_{obs}
(\mathscr{P}))=n$ (see \cite{NashSIAM});
\item \emph{observable}, if for every two distinct initial states 
$\upsilon_0,\upsilon_0^{'}$ there exists solutions $(\upsilon,u,y)$ and 
$(\upsilon^{'},u,y^{'})$ of $\mathscr{P}$ such that 
$\upsilon(0)=\upsilon_0$, $\upsilon^{'}(0)=\upsilon_0^{'}$, and $y \ne y^{'}$. 
\end{itemize}
Accessibility implies algebraic reachability and algebraic observability implies 
semi-algebraic observability, and semi-algebraic observability implies
observability. 
A polynomial system $\mathscr{P}$ is minimal if it is algebraically 
reachable and algebraically observable (see \cite[Theorem 4]{JanaVS} and 
\cite{Bartoszewicz} for details).  Notice that the other implication is true for 
rational systems but not for polynomial ones.
Algebraic, rational and semi-algebraic observability and algebraic reachability of 
polynomial systems can be checked using methods of computational algebra 
\cite{JanaCDC2016}. 

\medskip

Define the reachable set of a system $\Sigma$ in $\F$ of format $(m,n,\p)$ by
\begin{equation*}
 \mathrm{R}_{\Sigma}(s_0) = \{s(t)  \; | \; t \geqslant 0 \, , (s,u,y) 
 \mbox{ is a solution of } \Sigma, s(0)=s_0  \}. 
\end{equation*}
We will say that $\Sigma$ is 
\begin{itemize}
\item \emph{accessible}, if $\mathrm{R}_{\Sigma}(s_0)$ contains an open subset 
of $\R^{n}$;
\item \emph{algebraically reachable} if there is no non-trivial polynomial which is 
zero on $R_{\Sigma}(s_0)$;
\item \emph{span-reachable}, if the linear span of the elements 
$\mathrm{R}_{\Sigma}(s_0)$ is $\R^{n}$;
\item reachable if there exist no linear function which is zero on 
$\mathrm{R}_{\Sigma}(s_0)$;
\item \emph{weakly observable} if for every initial state $\hat{s} \in \R^n$ there is 
an open subset $V$ of $\R^n$ such that $\hat{s} \in V$ and for every 
$\hat{s} \ne \overline{s} \in V$, there exist solution $(s,u,y)$ and $(s',u,y')$ of 
$\Sigma$, with $s(0)=\hat{s}$ and $s'(0)=\overline{s}$, such that $y \neq y'$;
\item \emph{observable} if for every initial state  $\hat{s} \in \R^n$, $V = \R^n$ in 
the latter definition.
\end{itemize}
Accessibility implies algebraic reachability which in turn implies span-reachability. 
Observability implies weak observability.
Finally, if the system $\Sigma$ realizes an i-o map $p$,  is accessible and weakly 
observable, then it is minimal dimensional among all the systems from $\mathcal{F}$ 
realizing $p$ (see \cite[Theorem 1.12]{J}).

\section{Realization theory of dynamical neural networks}

\subsection{Recurrent neural nets and LSTM embeddings}

Let us denote by $\mathcal{F}_1 \subset \mathcal{F}$ the class of systems 
described by differential equations
\begin{equation} \label{RNN_equation}
\Sigma: \left\lbrace \begin{array}{ll} 
\dot{x}(t) = \overrightarrow{\sigma} \big( A x(t) + B u(t) \big)\\[2mm]
y(t) = C x(t) \end{array} \right.  \quad t\geq 0,
\end{equation}
with initial condition $x(0) = x_0\in \R^n$ and where
\begin{itemize}
\item $\sigma \in \mathcal{C}_1^{\omega}(\R)$ is  Lipschitz continuous,
\item $A \in \R^{n \times n}$, $B \in \R^{n \times m}$ and 
$C \in \R^{\mathrm{p} \times n}$ are matrices.
\end{itemize}

\begin{definition} \label{RNNs}
An element of $\mathcal{F}_1$ will be called a recurrent neural 
ODE (ODE-RNN). We will identify such systems with tuples  
$\Sigma = (A, B, C, \sigma, x_0)$ and the triple $(m,n,\p)$ will be its format.
\end{definition}


Let us denote by $\mathcal{F}_2$ the subclass of $\F$ of systems described by 
differential equations of the form
\begin{equation} \label{LSTM_equation}
\Sigma: \left\lbrace \begin{array}{llll} 
\dot{x}(t) = \mathfrak{U}^0 x(t) + g^2(t) \odot x(t) + g^3(t) \odot g^1(t) \\[2mm]
\dot{z}(t) =g^4(t)\\[2mm]
y(t) = C s(t) \end{array} \right.  \quad t\geq 0,
\end{equation}
with initial condition $s(0) = s_0=(x_0^T,z_0^T)^T \in \R^{2n}$, where 
\begin{itemize}
\item $g^i(t):=\overrightarrow{\sigma_i}(\mathfrak{U}^i h(t)+ 
\mathfrak{W}^i u(t)+b^i)$, for $i\in[4]$,
\item $u(t) \in \R^m$,  $s(t) = (x(t)^T, z(t)^T)^T \in \R^{2n}$, $y(t) \in \R^p$,
\item $h(t) = z(t) \odot \overrightarrow{\sigma_5}(x(t))$,
\item $\sigma=\{\sigma_1, \sigma_2, \sigma_3, \sigma_4, \sigma_5\}\subset 
C_1^{\omega}(\R)$ are all Lipschitz continuous,
\item $\mathfrak{U} = \{\mathfrak{U}^0,\mathfrak{U}^1, \mathfrak{U}^2, \mathfrak{U}^3, 
\mathfrak{U}^4\} \subset \R^{n \times n}$, 
\item $\mathfrak{W} = \{\mathfrak{W}^1, \mathfrak{W}^2, \mathfrak{W}^3, \mathfrak{W}^4\} 
\subset \R^{n \times m}$, and 
$C \in \R^{\mathrm{p} \times 2n}$,
\item  $\mathfrak{b} = \{b^1, b^2, b^3, b^4\} \subset \R^n$.
\end{itemize}

\begin{definition} \label{LSTM}
An element of $\mathcal{F}_2$ will be called a long short-term neural ODE
(ODE-LSTM). We identify such systems with tuples  
$\Sigma = (\mathfrak{U}, \mathfrak{W}, \mathfrak{b}, C, \sigma, s_0)$, set the triple 
$(m,2n,\p)$ to be its format and define the (ordered) set $\sigma$ to be its activation.
\end{definition}

We will restrict our attention to solutions of systems $\Sigma$ in 
$\F_1$ or in $\F_2$ of the form $(x,u,y)\in\mathcal{X}_{\on{ac}}\times
\mathcal{U}_{\on{pco}}\times \mathcal{S}_{\on{pc}}$ and we recall 
that activations $\sigma$ are assumed to be in
 $C_1^{\omega}(\R)$ so that global existence and uniqueness of solutions is verified and
determined by $u$ and for some initial value (see \cite{TMArxive} for details). 
Notice that the Euler discretization of such systems correspond exactly to 
residual RNNs and residual LSTMs (see \ref{eqn:resnet} for details).

\begin{theorem}
\label{Theoremi:main}
All ODE-RNNs and ODE-LSTMs in $\F_1$ and $\F_2$ have polynomial embeddings.  
Moreover, if an i-o map $p$ 
has a realization by a ODE-LSTM, or a ODE-RNN then $p$ is causal, analytic and 
$\mathrm{trdeg}~  A_{obs}(p) < +\infty$.
\end{theorem}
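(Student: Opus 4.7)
The plan is to prove the two claims by constructing explicit polynomial embeddings for $\F_1$ and $\F_2$, and then using those embeddings (together with general properties of $\F$) to obtain causality, analyticity and finite transcendence degree of any realized input-output map.

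For the embedding of an ODE-RNN $(A,B,C,\sigma,x_0)$, my plan is state augmentation exploiting $\sigma^{(1)}=P(\sigma)$. Concretely, for each $\alpha\in U$ I would introduce an auxiliary variable $\xi^\alpha\in\R^n$ meant to track $\overrightarrow{\sigma}(Ax+B\alpha)$. Componentwise differentiation together with $\sigma^{(1)}=P(\sigma)$ gives $\dot\xi^\alpha_i = P(\xi^\alpha_i)(A\dot x)_i$, while on intervals where $u\equiv\beta$ the original ODE reads $\dot x=\xi^\beta$; substituting yields polynomial vector fields on $\R^n\times(\R^n)^K$, one per input symbol. Then $F(x)=\bigl(x,\overrightarrow{\sigma}(Ax+B\alpha_1),\dots,\overrightarrow{\sigma}(Ax+B\alpha_K)\bigr)$ is a continuous injection (its first block is $x$) under which $(F(x),u,Cx)$ solves the polynomial system, and the readout $y=Cx$ is already polynomial in $F(x)$. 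For an ODE-LSTM I would follow the same scheme with more auxiliaries: $\eta=\overrightarrow{\sigma_5}(x)$ and, for each $i\in[4]$ and $\alpha\in U$, $g^{i,\alpha}=\overrightarrow{\sigma_i}\bigl(\mathfrak{U}^i(z\odot\eta)+\mathfrak{W}^i\alpha+b^i\bigr)$. Expanding $\dot h=\dot z\odot\eta+z\odot\dot\eta$ and differentiating each $g^{i,\alpha}$ via $\sigma_k^{(1)}=P_k(\sigma_k)$ should again yield polynomial dynamics on the augmented space, with $F(x,z)=(x,z,\eta,g^{1,\alpha_1},\dots,g^{4,\alpha_K})$ continuous and injective and readout $y=C(x,z)^T$ polynomial.

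For the second claim, causality of any $p$ realized by $\Sigma\in\F$ is immediate from the fact that the unique AC solution of (\ref{DNs}) on $[0,t]$ depends only on $u|_{[0,t]}$; analyticity of each $\phi_{p,k,\alpha_1,\dots,\alpha_l}$ follows from analytic dependence of flows of analytic vector fields on time, since $\phi_{p,k,\alpha_1,\dots,\alpha_l}(t_1,\dots,t_l)=g_k\bigl(\Phi^{\alpha_l}_{t_l}\circ\cdots\circ\Phi^{\alpha_1}_{t_1}(x_0)\bigr)$ with each $\Phi^{\alpha_j}_{t_j}$ analytic in its time argument. For finite transcendence degree I would invoke the polynomial embedding $\mathscr{P}(\Sigma)$ of some finite dimension $N$, which realizes $p$ by the remark following Definition~\ref{poly-systems}, and define $\Phi:\mathcal{A}_{obs}(\mathscr{P}(\Sigma))\to \mathcal{A}(\mathcal{U}_{pco})$ by sending a polynomial $q$ to the map $u\mapsto\bigl(t\mapsto q(\upsilon(t))\bigr)$, where $\upsilon$ is the trajectory of $\mathscr{P}(\Sigma)$ under $u$. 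Using the ODE and the chain rule, $\Phi$ is an $\R$-algebra homomorphism that intertwines each formal Lie derivative $f_\alpha$ with $D_\alpha$; since $\Phi$ sends the output coordinates $h_k$ to $p_k$, its image contains $p_1,\dots,p_{\p}$ and is closed under every $D_\alpha$, hence contains $A_{obs}(p)$. Surjections of integral domains do not increase transcendence degree, so $\mathrm{trdeg}\,A_{obs}(p)\le \mathrm{trdeg}\,\mathcal{A}_{obs}(\mathscr{P}(\Sigma))\le N<\infty$.

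The main obstacle I anticipate is bookkeeping rather than depth: verifying, for the LSTM, that every auxiliary introduced (particularly those absorbing the Hadamard product $h=z\odot\overrightarrow{\sigma_5}(x)$ and the four gates) really does satisfy polynomial dynamics on the augmented space with no non-polynomial residual, and that the resulting $F$ remains a continuous injection with the correct output coordinates. A subtler point is that $\Phi$ must take values in $\mathcal{A}(\mathcal{U}_{pco})$ and not merely in some larger space of maps; I plan to handle this through the isomorphism $\ell$ between $\mathcal{A}(\mathcal{U}_{pco})$ and $\mathcal{A}(\mathcal{U}^f_{pc},\R)$ together with the analytic-flow argument already used for $p$ itself.
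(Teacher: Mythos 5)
Your proposal matches the paper's proof in all essentials: the same state augmentation by one auxiliary block per input symbol (tracking $\overrightarrow{\sigma}(Ax+B\alpha_r)$ for the ODE-RNN, and the four gates plus $\overrightarrow{\sigma_5}(x)$ for the ODE-LSTM), the same use of $\sigma^{(1)}=P(\sigma)$ and the expansion of $\dot h$ to close the dynamics polynomially, and the same injective map $F$ containing the original state as a block. The only difference is that for causality, analyticity and $\mathrm{trdeg}\,\mathcal{A}_{obs}(p)<\infty$ the paper simply cites \cite{Bartoszewicz} and \cite{JanaSIAM}, whereas you sketch the underlying homomorphism argument directly; that sketch is correct and is essentially the proof of the cited results.
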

The proof of this theorem will be done in  Subsection \ref{Proof1}.  

\medskip

We highlight the fact that our definitions of polynomial embeddings are completely 
explicit and can be easily implemented algorithmically. Theorem 
\ref{Theoremi:main} allows to infer qualitative properties on neural nets induced by 
properties of their polynomial embeddings as we will show in the next section.

\subsection{Qualitative properties of ODE-LSTMs} 
\label{sect:min}

A system $\Sigma$ in $\F_2$ with given activation
$\sigma$ realizing an i-o map $p$ is said to be \emph{$\sigma$-minimal} there 
exists no $\Sigma^{'}$ in $\F_2$ with activation $\sigma$,  
such that $\Sigma^{'}$ is a realization of $p$ and 
$\dim(\Sigma^{'}) < \dim(\Sigma)$.

\begin{Lemma}
\label{minimality_1}
Assume that an i-o map $p$ is realized by a system $\Sigma$ in $\F_2$  with 
given activation function $\sigma$.  If one of the conditions 
below holds, then $\Sigma$ is a $\sigma$-minimal realization of $p$: 
\begin{enumerate}
\item $\mathscr{P}(\Sigma)$ is a minimal realization of $p$,
\item $\mathrm{trdeg} \mathcal{A}_{obs}(p)=\on{dim}(\Sigma)$,
\item $\mathscr{P}(\Sigma)$ is semi-algebraically observable and algebraically 
reachable,
\item $\mathscr{P}(\Sigma)$ is algebraically observable and accessible.
\end{enumerate}
\end{Lemma}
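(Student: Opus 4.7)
I would derive $\sigma$-minimality from each of the four hypotheses by two essentially separate lines of argument: a \emph{polynomial embedding} argument that handles (1), (3), (4), and a \emph{transcendence-degree dimension bound} argument for (2). The glue throughout is Theorem~\ref{Theoremi:main}, which guarantees that the polynomial embedding $\mathscr{P}(\Sigma')$ of any ODE-LSTM realization $\Sigma'$ of $p$ is itself a polynomial realization of $p$.

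\textbf{Chain $(4)\Rightarrow(3)\Rightarrow(1)$.} The step $(4)\Rightarrow(3)$ is immediate from the implications recalled in Subsection~2.3: accessibility implies algebraic reachability, and algebraic observability implies semi-algebraic observability. For $(3)\Rightarrow(1)$, I would argue that algebraic reachability of $\mathscr{P}(\Sigma)$ makes the canonical evaluation homomorphism $\mathcal{A}_{obs}(\mathscr{P}(\Sigma)) \to \mathcal{A}_{obs}(p)$ injective: any polynomial in its kernel vanishes on $\mathrm{R}_{\mathscr{P}(\Sigma)}(\upsilon_0)$ and is therefore trivial. Since this homomorphism is surjective onto its image by construction (formal Lie derivatives along $f_\alpha$ correspond exactly to the derivations $D_\alpha$), the two algebras share the same transcendence degree. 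Combined with semi-algebraic observability, this yields $\dim\mathscr{P}(\Sigma) = \mathrm{trdeg}\,\mathcal{A}_{obs}(\mathscr{P}(\Sigma)) = \mathrm{trdeg}\,\mathcal{A}_{obs}(p)$, which is the minimality criterion for polynomial systems recorded in Subsection~2.3.

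\textbf{Transfers to $\sigma$-minimality.} For $(1) \Rightarrow \sigma$-minimal, suppose by contradiction that some $\Sigma' \in \F_2$ with the same activation $\sigma$ realizes $p$ with $\dim \Sigma' < \dim \Sigma$. By Theorem~\ref{Theoremi:main} its polynomial embedding $\mathscr{P}(\Sigma')$ realizes $p$. Because the embedding construction depends only on $\sigma$ and on the input state dimension, and adjoins a fixed number of auxiliary polynomial coordinates per original state variable, $\dim\mathscr{P}(\cdot)$ is strictly monotone in the input state dimension; hence $\dim\mathscr{P}(\Sigma') < \dim\mathscr{P}(\Sigma)$, contradicting~(1). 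For $(2) \Rightarrow \sigma$-minimal, I would use the classical analytic dimension bound: for any realization $\Sigma' \in \F_2$ of $p$, the observation algebra of $\Sigma'$ surjects under evaluation onto $\mathcal{A}_{obs}(p)$ and embeds into $\mathcal{C}^{\omega}(\R^{\dim \Sigma'})$, whose transcendence degree over $\R$ equals $\dim \Sigma'$. Hence $\mathrm{trdeg}\,\mathcal{A}_{obs}(p) \le \dim \Sigma'$, and the hypothesis $\dim \Sigma = \mathrm{trdeg}\,\mathcal{A}_{obs}(p)$ forces $\dim \Sigma \le \dim \Sigma'$.

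\textbf{Main obstacle.} The delicate point is the transfer $(1) \Rightarrow \sigma$-minimal, whose validity depends on the explicit embedding $\mathscr{P}$ being strictly monotone in the input state dimension at fixed activation $\sigma$. This is plausible from the natural idea of adding coordinates indexed by original states and by the activations in $\sigma$, but it must be verified against the specific construction used in Subsection~\ref{Proof1} to rule out unexpected collapses of auxiliary variables when $\dim \Sigma'$ is smaller. A minor subtlety in Step $(3)\Rightarrow(1)$ is to identify the kernel of the evaluation map with the ideal of polynomials vanishing on $\mathrm{R}_{\mathscr{P}(\Sigma)}(\upsilon_0)$; this follows from the fact that $\mathcal{A}_{obs}(\mathscr{P})$ is built by Lie differentiation along $f_\alpha$, so evaluating a polynomial $\phi$ along a trajectory produces exactly the i-o observable $u \mapsto \bigl(t \mapsto \phi(\upsilon(t;u))\bigr)$.
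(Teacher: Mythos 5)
Your handling of conditions (1), (3) and (4) is correct and follows the route the paper leaves implicit as ``straightforward'': the chain $(4)\Rightarrow(3)\Rightarrow(1)$ uses exactly the implications recalled in Subsection~2.3 (accessibility implies algebraic reachability, algebraic observability implies semi-algebraic observability, and algebraic reachability makes the evaluation homomorphism $\mathcal{A}_{obs}(\mathscr{P}(\Sigma))\to\mathcal{A}_{obs}(p)$ injective, so semi-algebraic observability forces $\dim\mathscr{P}(\Sigma)=\mathrm{trdeg}\,\mathcal{A}_{obs}(p)$). The obstacle you flag for the transfer $(1)\Rightarrow\sigma$-minimality is real but resolves exactly as you hope: the construction of Subsection~4.1 gives $\dim\mathscr{P}(\Sigma')=4n'K+3n'$ when $\dim\Sigma'=2n'$, with $K=|U|$ fixed by the i-o map, so the embedding dimension is strictly increasing in the LSTM dimension and no collapse of auxiliary coordinates can occur.

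The genuine gap is in your argument for condition (2). It rests on the claim that $\mathcal{C}^{\omega}(\R^{d})$ has transcendence degree $d$ over $\R$; this is false, since the ring of analytic functions in $d$ variables has infinite transcendence degree (already $x$ and $e^{x}$ are algebraically independent over $\R$ in $\mathcal{C}^{\omega}(\R)$). Correspondingly, the ``classical analytic dimension bound'' $\mathrm{trdeg}\,\mathcal{A}_{obs}(p)\le\dim\Sigma'$ fails for general analytic realizations: the one-dimensional system $\dot{x}=1$, $y=(x,e^{x})$ realizes a map whose observation algebra has transcendence degree $2$. Note also that the bound you do get from the embedding, namely $\mathrm{trdeg}\,\mathcal{A}_{obs}(p)\le\dim\mathscr{P}(\Sigma')=4n'K+3n'$, is too weak to conclude $\dim\Sigma\le\dim\Sigma'$. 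The paper takes a different route here: it derives point (2) from point (1) together with a cited result on minimal rational realizations (Proposition~6 of N\v{e}mcov\'a--van Schuppen), i.e., it characterizes minimality of the polynomial embedding via $\mathrm{trdeg}\,\mathcal{A}_{obs}(p)$ rather than via a dimension bound on analytic realizations. To repair your proof of (2) you must either invoke that proposition, or establish the sharper inequality $\mathrm{trdeg}\,\mathcal{A}_{obs}(p)\le\dim\Sigma'$ specifically for ODE-LSTM realizations; this does not follow from embedding the observation algebra into $\mathcal{C}^{\omega}(\R^{\dim\Sigma'})$.
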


\begin{proof}
The second point comes from the first and from \cite[Proposition 6]{JanaVS}. The 
rest of the proof is straightforward.
\end{proof}

\begin{proposition} \label{RNN_ReachObs}
Let  $\Sigma$ be a system in $\F_2$.
\begin{enumerate}
\item If $\mathscr{P}(\Sigma)$ is accessible, then $\Sigma$ is also accessible.  
\item
If $\mathscr{P}(\Sigma)$ is algebraically reachable, then $\Sigma$ is span-
reachable.  In particular, if $\mathscr{P}(\Sigma)$ is accessible, then  $\Sigma$ is 
span-reachable. 
\item
If $\mathscr{P}(\Sigma)$ is observable, then $\Sigma$ is observable. In particular, if 
$\mathscr{P}(\Sigma)$ is algebraically observable, then $\Sigma$ is observable. 
\item If $\mathscr{P}(\Sigma)$ is semi-algebraically observable, then $\Sigma$ is 
weakly observable. 
\end{enumerate}
\end{proposition}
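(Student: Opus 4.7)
The plan is to reduce every assertion to the identity $\mathrm{R}_{\mathscr{P}(\Sigma)}(F(s_0)) = F(\mathrm{R}_{\Sigma}(s_0))$ between the reachable sets of $\Sigma$ and its embedding. This identity follows from uniqueness of solutions (the right-hand sides are locally Lipschitz and inputs are piecewise constant): the $F$-image of the unique solution of $\Sigma$ issued from $s_0$ under an input $u$ is a solution of $\mathscr{P}(\Sigma)$ issued from $F(s_0)$ under the same input, and by uniqueness it is the only one. The explicit embedding constructed in Theorem~\ref{Theoremi:main} augments the state of $\Sigma$ with additional polynomial coordinates, and therefore admits a linear (hence polynomial and open) left inverse $\pi$ satisfying $\pi \circ F = \mathrm{id}$. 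Applying $\pi$ to the identity above yields $\pi(\mathrm{R}_{\mathscr{P}(\Sigma)}(F(s_0))) = \mathrm{R}_{\Sigma}(s_0)$.

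Parts~(1) and~(2) then follow by elementary topological and algebraic manipulations. For~(1), if $\mathrm{R}_{\mathscr{P}(\Sigma)}(F(s_0))$ contains a non-empty open set $V$ in the state space of $\mathscr{P}(\Sigma)$, then $\pi(V)$ is a non-empty open subset of $\R^{2n}$ contained in $\mathrm{R}_{\Sigma}(s_0)$, since coordinate projections are open maps. For~(2), assume towards a contradiction that some non-zero linear functional $\ell$ on $\R^{2n}$ vanishes on $\mathrm{R}_{\Sigma}(s_0)$; then $\ell \circ \pi$ is a non-zero polynomial vanishing on $F(\mathrm{R}_{\Sigma}(s_0)) = \mathrm{R}_{\mathscr{P}(\Sigma)}(F(s_0))$, contradicting algebraic reachability. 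The ``in particular'' clause follows from the implication ``accessibility $\Rightarrow$ algebraic reachability'' recalled in the preliminaries.

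For~(3), given distinct $s_0,s_0' \in \R^{2n}$, injectivity of $F$ gives $F(s_0) \neq F(s_0')$; observability of $\mathscr{P}(\Sigma)$ then supplies an input $u$ producing distinct outputs from $F(s_0)$ and $F(s_0')$, and Definition~\ref{poly-systems} guarantees these are precisely the outputs of $\Sigma$ from $s_0$ and $s_0'$ under $u$. The ``in particular'' clause is the implication ``algebraic observability $\Rightarrow$ observability'' recalled in the preliminaries. Finally, (4) reduces to~(3) via ``semi-algebraic observability $\Rightarrow$ observability'' for polynomial systems, combined with ``observability $\Rightarrow$ weak observability''. The main obstacle throughout is to confirm that the construction of $\mathscr{P}(\Sigma)$ in Theorem~\ref{Theoremi:main} indeed exhibits the claimed retract structure and that uniqueness forces trajectories of $\mathscr{P}(\Sigma)$ issued from $F(s_0)$ to remain in $F(\R^{2n})$; once these two structural facts are in hand, the rest is routine.
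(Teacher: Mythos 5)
Parts (1)--(3) of your argument are correct and essentially the paper's own route: everything rests on the identity $F\big(\mathrm{R}_{\Sigma}(s_0)\big)=\mathrm{R}_{\mathscr{P}(\Sigma)}(F(s_0))$ (which you justify more carefully than the paper does, via uniqueness of solutions of the polynomial system), together with continuity and injectivity of $F$ and the fact that the $s$-coordinates appear among the coordinates of $\mathscr{P}(\Sigma)$. Your use of the left inverse $\pi$ is a harmless variant: in (1) the paper pulls the open set back along $F$ rather than pushing it forward along $\pi$, and in (2) the paper differentiates the linear relation to exhibit a vanishing polynomial, whereas you observe directly that $\ell\circ\pi$ is already a nonzero (linear) polynomial vanishing on $\mathrm{R}_{\mathscr{P}(\Sigma)}(F(s_0))$; your version of (2) is the cleaner of the two.

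The gap is in (4). You reduce it to (3) via the implication ``semi-algebraic observability $\Rightarrow$ observability'' for polynomial systems. Although the preliminaries do contain that sentence, it is false as a general implication: for $\dot{x}=0$, $y=x^{2}$ on $\R$, the observation algebra is $\R[x^{2}]$, whose transcendence degree is $1=n$, so the system is semi-algebraically observable, yet the initial states $1$ and $-1$ produce identical outputs. What semi-algebraic observability actually delivers --- and what the paper invokes via \cite[Proposition 4.20, Corollary 4.22]{NashSIAM} --- is only \emph{weak} observability of $\mathscr{P}(\Sigma)$. The correct argument for (4) therefore transfers weak observability, not observability, through $F$: given $s_0$, take the open set $V\ni F(s_0)$ witnessing weak observability of $\mathscr{P}(\Sigma)$, set $U=F^{-1}(V)$ (open and containing $s_0$ by continuity of $F$), and for any $s_0'\in U\setminus\{s_0\}$ use injectivity of $F$ to get $F(s_0')\neq F(s_0)$ in $V$ and then Lemma~\ref{Theoremi:main:lemma} to pull the distinguishing outputs back to $\Sigma$. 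A further sign that your chain cannot be the intended one: if it were valid, part (4) would be an immediate corollary of part (3) with a strictly stronger conclusion, and there would be no reason for the proposition to claim only weak observability of $\Sigma$.
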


The proof of the above proposition will be done in Subsection \ref{Proof2}. Notice that a 
similar result was established for $\F_1$ in \cite{TMArxive} with the help of an auxiliary 
polynomial embedding.

\medskip

Note that the variables involved in the polynomial embedding $\mathscr{P}
(\Sigma)$ can be naturally reordered, or simply reduced, following the expressions 
of the activation functions $\sigma$ of $\Sigma$. 

Accessibility and algebraic/semi-algebraic observability conditions for rational/
polynomial systems can be checked by using methods of computer algebra 
\cite{JanaCDC2016}. In contrast, for checking accessibility and (weak) 
observability of an ODE-LSTM the only systematic tools are the rank conditions 
\cite[Theorems 2.2,  2.5,  3.1,  3.5]{NLCO} or 
\cite[Corallaries 2.2.5,2.3.5]{Isidori}, which are not computational effective 
manner for analytic $\sigma$. 
Notice that minimality of $\mathscr{P}(\Sigma)$ is a much weaker condition than
accessibility and weak observability of $\Sigma$. This suggests that using 
realization theory of polynomial systems is likely to yield more useful results for 
ODE-LSTMs than using realization theory of general analytic systems. 

\section{Proofs}

\subsection{Proof of Theorem \ref{Theoremi:main}}\label{Proof1}

Let us prove that all ODE-RNN have polynomial embeddings.
Let $\Sigma = (A, B, C, \sigma, x_0)$ be an ODE-RNN with format $(m,n,\p)$.
Denote $L=Kn+n$ and consider the bijection
\begin{eqnarray*}
\phi:[K]\times [n] & \longrightarrow & [Kn] \\
(r,j) & \longmapsto & \phi(r,j):=r + K(j-1)
\end{eqnarray*}
For $L$ formal symbols $X_\gamma$,  let us write $X_{\phi(j,r)}$ for the unique 
index $\gamma \in [Kn]$ such that $\phi(j,r)=\gamma$.

Then one can construct an associated polynomial system 
$\mathscr{P}(\Sigma)=(\{P_\alpha\}_\alpha, h,  \upsilon_0)$ with
\begin{itemize}
\item $P_\alpha\in \R[X_1,\ldots,X_L;\R^{Kn}]$
\item $h\in \R[X_1,\ldots,X_L;\R^{\mathrm{p}}]$
\item $\upsilon_0 \in \R^L$.
\end{itemize}
as follows:
\begin{align*}
& P_{\phi(j,r),u(t)}= \tilde{P}(X_{j,r}) 
  \left(\sum_{k=1}^{n} a_{j,k} \tilde{P}_0(X_{k,u(t)})\right)   \\
 & P_{Kn+j,r}=\tilde{P}_0(X_{j,r}),  \\
 &  h_{k}=\sum_{j=1}^{n} c_{k,j} X_{j+Kn}\\
 & (\upsilon_0)_{\phi(j,r)}=\sigma \big( e_j^T(Ax_0 + B\alpha_r) \big), \\
 & (\upsilon_0)_{Kn+j}=e_j^Tx_0.
\end{align*}
where $k\in[\mathrm{p}], j\in[n ], r\in[K] $ and $u(t)$ ranges the $K$-
components of $\{P_\alpha\}_\alpha$ and where $\tilde{P},\tilde{P}_0$ are polynomials in one variables defining a polynomial system, which by \cite[Lemma 1]{TMArxive} is equivalent to Assumption \eqref{assum1}. 
By \cite[Lemma 2]{TMArxive},  if $(x,u,y)$ is a solution of a ODE-RNN $\Sigma$, 
then $(F(x),u,y)$ is a solution of $\mathscr{P}(\Sigma)$ where 
 $F:\mathbb{R}^n \rightarrow \mathbb{R}^L$ is given by  
 $F(x)=(z_1,\ldots,z_{nK},x^T)^T$, where 
 $z_{\phi(j,\alpha)}=\sigma(e_j^T(Ax+B\alpha))$.

\bigskip

Let us now prove that all ODE-LSTMs have polynomial embeddings.
Let $\Sigma = (\mathfrak{U}, \mathfrak{W}, \mathfrak{b}, C, \sigma, s_0)$ be a 
ODE-LSTM with format $(m,2n,\p)$, denote $U=\{\alpha_1,\ldots,\alpha_K\}$ its input 
space and $s(t)=(x(t)^T,z(t)^T)^T $ its state trajectory.  
Consider the ordering
\begin{eqnarray*}
\phi:[4]\times [n] \times [K] & \longrightarrow & [4nK], \\
(l,j,r) & \longmapsto & \phi(l,j,r):=j+n(l-1) + 4n(r-1).
\end{eqnarray*}
Recall the discussion after \eqref{LSTM_equation} the function $h:[0,+\infty 
[ \rightarrow \R^{n}$, 
i.e., $h(t)_j = z_{j}(t) \sigma_5(x_j(t))$, $j=1,2,\ldots,n$.  In particular, 
there exists a function
$\chi:\mathbb{R}^{2n} \rightarrow \mathbb{R}^n$, $\chi(s(t))=h(t)$. 
Clearly, we can defined a function $R^l:U \times \R^{2n} \rightarrow \R^n$ such that 
for $\alpha \in U$, $R(\alpha,s)$ is polynomial in $s$ and 
$$
R^l(\alpha,s(t))=\mathfrak{U}^l \underbrace{\chi(s(t))}_{h(t)}+ 
\mathfrak{W}^l \alpha +b^l,\quad l\in[4].
$$
By writing $\zeta_{ \phi(l,j,r)}(s(t))=\sigma_l(e^T_j(R^l(\alpha_r,s(t))))$ we can define
a map $F: \R^{2n} \rightarrow \R^{4nK+3n}$ such that
$$
F(s(t))=(\zeta_1(s(t)),\ldots,\zeta_{4nK}(s(t)),\overrightarrow{\sigma_5}(x(t)),s(t)).
$$
Define $ \upsilon(t) = F(s(t))$. 
Its coordinates are explicitly written as
\begin{equation} \label{map_proof_reach_obs}
\hspace*{-0.5cm}{\left\lbrace \begin{array}{ll}
\upsilon_{\phi(l,j,r)}(t) = \sigma_l \big( e_j^T \big( R^l(\alpha_r ,s(t))\big) 
\big) \, ,    \\[2mm]
\upsilon_{4nK+j}(t) = \sigma_5(x_j(t)) \, , \quad j\in[n] \\[2mm]
\upsilon_{4nK+n+k}(t) = s_k (t) \quad k\in[2n]
\end{array} \right.}
\end{equation}
Let us ease the notation and write 
\begin{itemize}
\item $ \upsilon_{ \phi(l,j,r)}(t)=\upsilon_{(l-1)n+j,r}(t)$,
\item $\upsilon_{\alpha_r}(t)=(\upsilon_{1,r}(t),\ldots,\upsilon_{4n, r}(t))$,
\item $\upsilon_{4nK+*}(t) = (\upsilon_{4nK+1}(t), \ldots, \upsilon_{4nK+n}(t))$,
\item $\upsilon(t) = (\upsilon_{\alpha_1}(t), \ldots, 
\upsilon_{\alpha_K}(t), \upsilon_{4nK+*}(t), x(t)^T, z(t)^T)^T$
\item $x_j(t) = \upsilon_{4nK+n+j}(t)$ and $z_j(t) = \upsilon_{4nK+2n+j}(t)$,
\end{itemize}
Set $\mathfrak{U}^l=(\mathfrak{U}^l_{i,j})_{i,j=1}^{n}$, $0 \leq l \leq 4$, and 
$\sigma_k^{(1)} = P_k(\sigma_k),k\in[5]$.
For each $r=1,2,\ldots, K$, define the polynomials $Q_{j,r}$, $Q_{5n+j,r}$, 
$Q_{ln+j,r}$, $j=1,\ldots,n$, $l \in [4]$, as follows:
\begin{itemize}
\item $Q_{j,r}(\upsilon(t))=\sum_{i=1}^n \mathfrak{U}^0_{j,i} x_i(t) + 
\upsilon_{n+j,r}(t) x_j(t) + \upsilon_{2n+j,
r}(t) \upsilon_{j,r}(t)$
\item $Q_{5n+j,r}(\upsilon(t))=Q_{j,r}(\upsilon(t)) P_5(\upsilon_{4nK+j}(t))$
\item for $l\in[4]$,  
\[ 
 \begin{split}
  & Q_{ln+j,r}(\upsilon(t))=P_{l}(\upsilon_{ \phi(l,j,r)}(t)) \times \\
& \sum_{i=1}^n\Big( \mathfrak{U}_{j,i}^{l} \big( \upsilon_{ \phi(4,i,r)}(t) 
\upsilon_{4nK+i}(t) + 
 z_j(t) Q_{i,r}(\upsilon(t)) P_5(\upsilon_{4nK+i}(t) \big) \Big) 
 \end{split}
\]
\end{itemize}
We set $ \upsilon(t)$ to be the state variables of the polynomial system 
$\mathscr{P}(\Sigma)=(\{Q_u\}_u, C,  \upsilon_0)$
defined,for $l\in[4],j \in[n]$, by
\begin{align*} \label{rational_system_R(Sigma)}
& \dot x_j(t) = Q_{j,u(t)}(\upsilon(t)) \, , ~ x_j(0)=e_j^Tx_0,\\
& \dot{z}_j(t) = \upsilon_{3n+j,u(t)}(t) \, , ~ z_j(0)=e_j^T z_0 \\[2mm]
& \dot \upsilon_{ (l-1)n+j,u(t)}(t) = Q_{ln+j,u(t)}(\upsilon(t))  \, , \\[2mm]
& \dot \upsilon_{4nK + j}(t) = Q_{5n+j,u(t)}(\upsilon(t)) \\[2mm]
& \upsilon_{  \phi(l,j,r)}(0) = \sigma_{l} \big( e_j^T(R^l(\alpha_r,s(0))) 
\big) \, , \hspace*{2mm} \\
& \upsilon_{4nK+j}(0) = \sigma_5(e_j^T x_0)\\
& h(0) = z_0 \odot \overrightarrow{\sigma_5}(x_0) \, , \\[2mm]
& y(t)=C s(t)
\end{align*}
with initial state $\upsilon_0 = \upsilon(0)$.

\begin{Lemma} \label{Theoremi:main:lemma}
Let $\upsilon(t) = F(s(t))$. If $(s,u,y)$ is a solution of an ODE-LSTM $\Sigma$, then $(\upsilon,u,y)$ is a solution of $\mathscr{P}(\Sigma)$.
\end{Lemma}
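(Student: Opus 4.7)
The plan is to fix an arbitrary piecewise-constant input $u\in\mathcal{U}_{\on{pco}}$, restrict attention to any subinterval of $[0,+\infty[$ on which $u(t)=\alpha_r\in U$ is constant, and verify that each coordinate of the claimed trajectory $\upsilon(t)=F(s(t))$ satisfies the corresponding equation of $\mathscr{P}(\Sigma)$. The initial condition $\upsilon(0)=\upsilon_0$ is immediate from the definitions of $F$ and $\upsilon_0$, while the output equation $y(t)=Cs(t)$ is common to both systems since $s(t)$ appears as the coordinate block $\upsilon_{4nK+n+k}$ of $\upsilon$ for $k\in[2n]$. What remains is the dynamical part, which splits naturally into the four blocks $\dot x_j$, $\dot z_j$, $\dot\upsilon_{4nK+j}$, and $\dot\upsilon_{\phi(l,j,r)}$.

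The first three blocks reduce to algebraic rewrites. By definition of $F$, on the chosen interval every gate component satisfies $g^l(t)_j=\sigma_l(e_j^T R^l(\alpha_r,s(t)))=\upsilon_{(l-1)n+j,r}(t)$ for $l\in[4]$, and $\sigma_5(x_j(t))=\upsilon_{4nK+j}(t)$. Substituting these identities into the LSTM equations $\dot x_j=(\mathfrak{U}^0 x)_j+g^2_j\,x_j+g^3_j\,g^1_j$ and $\dot z_j=g^4_j$ produces exactly $Q_{j,r}(\upsilon(t))$ and $\upsilon_{3n+j,r}(t)$, respectively. For the $\sigma_5$-block, differentiating $\upsilon_{4nK+j}=\sigma_5(x_j)$ and invoking $\sigma_5^{(1)}=P_5(\sigma_5)$ together with the already-verified expression for $\dot x_j$ yields $\dot\upsilon_{4nK+j}=P_5(\upsilon_{4nK+j})\,Q_{j,r}(\upsilon)=Q_{5n+j,r}(\upsilon)$, matching the prescribed dynamics.

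The main, and only genuinely involved, step is the fourth block, which I expect to carry essentially all the bookkeeping burden of the proof. Differentiating $\upsilon_{\phi(l,j,r)}=\sigma_l(e_j^T R^l(\alpha_r,s(t)))$ and using $\sigma_l^{(1)}=P_l(\sigma_l)$ produces a factor $P_l(\upsilon_{\phi(l,j,r)})$ multiplied by $e_j^T\mathfrak{U}^l\dot h(t)$, where by the product rule applied to $h=z\odot\overrightarrow{\sigma_5}(x)$ one has $\dot h_i=\dot z_i\,\sigma_5(x_i)+z_i\,\sigma_5^{(1)}(x_i)\,\dot x_i$. Each factor inside $\dot h_i$ is then rewritten in terms of coordinates of $\upsilon$ using the substitutions already established: $\dot z_i=\upsilon_{\phi(4,i,r)}$, $\sigma_5(x_i)=\upsilon_{4nK+i}$, $\sigma_5^{(1)}(x_i)=P_5(\upsilon_{4nK+i})$, and $\dot x_i=Q_{i,r}(\upsilon)$. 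Collecting terms inside $P_l(\upsilon_{\phi(l,j,r)})\sum_i \mathfrak{U}^l_{j,i}\dot h_i$ reproduces precisely the expression of $Q_{ln+j,r}(\upsilon)$ given in the construction; indeed the polynomials $Q_{ln+j,r}$ were designed so that this chain-rule computation closes inside the polynomial ring generated by the coordinates of $\upsilon$. Pasting together successive intervals of constancy of $u$ gives a global solution of $\mathscr{P}(\Sigma)$ defined by $\upsilon=F(s)$, which completes the proof.
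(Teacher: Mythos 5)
Your proposal is correct and follows essentially the same route as the paper's proof: substitute the gate identities $g^l_j(t)=\upsilon_{(l-1)n+j,r}(t)$ and $\sigma_5(x_j(t))=\upsilon_{4nK+j}(t)$ into the LSTM equations for the $\dot x_j$, $\dot z_j$, $\dot\upsilon_{4nK+j}$ blocks, then handle the gate block by the chain rule with $\sigma_l^{(1)}=P_l(\sigma_l)$ and the product rule on $\dot h_i=\dot z_i\sigma_5(x_i)+z_i\sigma_5^{(1)}(x_i)\dot x_i$, which closes into $Q_{ln+j,r}(\upsilon)$. The only cosmetic difference is that you make explicit the reduction to intervals of constancy of $u$ and the verification of the initial and output equations, which the paper leaves implicit.
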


\begin{proof}
Let $(s,u,y)$ be a solution of $\Sigma$. To show that $(\upsilon,u,y)$ is a 
solution of $\mathscr{P}(\Sigma)$, it suffices to prove that $\upsilon$ 
satisfies the differential equation
\eqref{rational_system_R(Sigma)}.  For $u(t)\in U$,  we calculate the first time-derivative of 
$x_j(t)$. Following \eqref{LSTM_equation}, we know that we have
\begin{eqnarray*}
& & \dot{x}_j(t) \\
& =& \sum_{i=1}^n \mathfrak{U}^0_{j,i} x_i(t) + g^2_j(t) x_j(t) \\
& & + g^2_j(t) \sigma_1 \big( e_j^T(R^1(u(t) ,t)) \big)\\
& =& \sum_{i=1}^n \mathfrak{U}^0_{j,i} x_i(t) + \sigma_2 \big( e_j^T(R^2(u(t) ,t)) \big) x_j(t)\\
& & + \sigma_3 \big( e_j^T(R^3(u(t)  ,t)) \big) \, \sigma_1 \big( 
e_j^T(R^1(u(t)  ,t)) \big)\\
& = & Q_{j,u(t)}(\upsilon(t)) \, ,
\end{eqnarray*}
as desired.  Now notice that $\dot{z}_j(t) = 
\upsilon_{\phi(4,j,r)}(t)$, where $r,t$ are such that $u(t)=\alpha_r \in U$. Then we get
\begin{eqnarray*}
 \dot{\upsilon}_{4nK+j}(t) 
&=&  \frac{d}{dt} \big( \sigma_5(x_j(t)) \big) \\
&= & \dot{x}_j(t) P_5 \big( \sigma_5(x_j(t)) \big)\\
&=&  Q_{j,u(t)}(\upsilon(t)) P_5(\upsilon_{4nK+j}(t)) \, .
\end{eqnarray*}
Now take $l \in[4]$. We obtain
\begin{eqnarray*}
& & \dot{\upsilon}_{\phi(l,j,r)}(t)= \frac{d}{dt} \Big( \sigma_l 
\big( e_j^T(R^l(\alpha_r ,s(t))\big) \Big) \\
& =&  P_l(\upsilon_{\phi(l,j,r)}(t)) \Big( \sum_{i=1}^n 
\mathfrak{U}_{j,i}^l \dot{h}(t) \Big)\\
& =& P_l(\upsilon_{\phi(l,j,r)}(t)) \\
& &  \Big( \sum_{i=1}^n \mathfrak{U}_{j,i}^l \big(\dot{z}_i(t) \sigma_5(x_i(t)) +z_i(t) 
\dot{x_i}(t) P_5(\sigma_5(x_i(t)))\big) \Big) \\
& =& P_l(\upsilon_{(\phi(l,j,r)}(t))\Big( \sum_{i=1}^n 
\mathfrak{U}_{j,i}^l \big( \upsilon_{3n+i,u(t)}(t) \upsilon_{4nK+i}(t) \\
& &  \qquad \qquad \qquad \quad + z_i(t) Q_{i,u(t)}(\upsilon(t)) 
P_5(\upsilon_{4nK+i}(t)) \big) \Big)   \\
& =& Q_{ln+j,u(t)}(\upsilon(t)) \, .
\end{eqnarray*}
This completes the proof of Lemma \ref{Theoremi:main:lemma}.
\end{proof}

We observe that the map $F$ is a 
smooth map, in particular it is a continuous map and Lemma 
\ref{Theoremi:main:lemma} showed that, for all $u \in \mathcal{U}_{pco}$, if $
(s,u,y)$ is a solution of  $\Sigma$, then $(\upsilon,u,y)$ is a solution of 
$\mathscr{P}(\Sigma)$, with 
\begin{equation} \label{equation_proof_reach_obs}
 \upsilon(t) = F(s(t)) \, ,  \quad \forall t \geqslant 0,
\end{equation}
where $s(t) = (x(t)^T,z(t)^T)^T \in \R^{2n}$.  

\medskip

Finally,  if an i-o map $p: \mathcal{U}_{pco} \rightarrow \mathcal{S}_{\on{pc}}$ is 
realized by a system $\Sigma$ in $\F_1$ or $\F_2$, then the polynomial embedding 
$\mathscr{P}(\Sigma)$ also realizes $p$ by \cite[Lemma 2]{TMArxive} and Lemma 
\ref{Theoremi:main:lemma}. Thus $p$ is causal and analytic and $\on{trdeg}
\mathcal{A}_{obs}(p)<\infty$ (by \cite[Theorem 3]{Bartoszewicz}, 
\cite[Theorem 5.16]{JanaSIAM}).

This concludes the proof of Theorem \ref{Theoremi:main}.

\medskip


\subsection{Proof of Proposition \ref{RNN_ReachObs}}\label{Proof2}

First,  we prove that, if $\mathscr{P}(\Sigma)$ is accessible, then
$\Sigma$ is also accessible.  By definition of the map $F$ from 
\eqref{map_proof_reach_obs}, we obtain
\begin{equation} \label{map_onto_reach_set}
F \big( R_{\Sigma}(s_0) \big) = R_{\mathscr{P}(\Sigma)}(\upsilon_0) \, ,
\end{equation}
where $R_{\Sigma}(s_0)$ and $R_{\mathscr{P}(\Sigma)}(\upsilon_0)$ are 
respectively the reachable set of $\Sigma$ and of $\mathscr{P}(\Sigma)$, and $
\upsilon_0 = \upsilon(0)$. Now suppose that the polynomial system $\mathscr{P}
(\Sigma)$ is accessible, i.e. there exists a non-empty open set $O$ included in 
$R_{\mathscr{P}(\Sigma)}(\upsilon_0)$. Thus $F^{-1}(O)$ is a non-empty open 
set (because $F$ is a continuous map) included in $R_{\Sigma}(s_0)$, so that 
$\Sigma$ is accessible.

\medskip

Next,  we prove that, if $\mathscr{P}(\Sigma)$ is algebraically reachable, then 
$\Sigma$ is span-reachable. Suppose that $\mathscr{P}(\Sigma)$ is algebraically 
reachable, i.e. there is no non-trivial polynomial which vanishes on the reachable set 
$R_{\mathscr{P}(\Sigma)}(\upsilon_0)$. Take $u \in \mathcal{U}_{pco}$ such that 
$(s,u,y)$ a solution of $\Sigma$, with 
$s(t)=(x(t)^T,z(t)^T)^T \in \R^{2n}$, for $t \geqslant 0$. Consider $(\upsilon,u,y)$ 
the solution of $\mathscr{P}(\Sigma)$ obtained by Lemma 
\ref{Theoremi:main:lemma}. Assume that $\Sigma$ is not span-reachable, i.e. there exist 
reals $\lambda_1, \ldots, \lambda_{2n}$ such that 
\begin{equation*}
\displaystyle \sum_{j=1}^n \lambda_j x_j(t) + \sum_{j=1}^n \lambda_{n+j} z_j(t) 
= 0 \, .
\end{equation*}
Then taking the first derivative of the above equation gives
\begin{equation*}
\displaystyle \sum_{j=1}^n \lambda_j Q_{j,u(t)}(\upsilon(t)) + \sum_{j=1}^n 
\lambda_{n+j} \upsilon_{3n+j,u(t)}(t) = 0 \, ,
\end{equation*}
which is a contradiction, because there exists at least one non-trivial polynomial 
(given by the above equation) vanishing on the reachable set 
$R_{\mathscr{P}(\Sigma)}(\upsilon_0)$.

\medskip

Next,  we prove that, if $\mathscr{P}(\Sigma)$ is observable, the
$\Sigma$ is also observable. Take $s_0, s_0' \in \R^{2n}$ two initial states of 
$\Sigma$ such that $s_0 \neq s_0'$. Thus we have 
$\upsilon_0 = F(s_0) \neq F(s_0') = \upsilon_0'$, 
because the map $F$, defined in \eqref{map_proof_reach_obs}, is injective. As 
$\mathscr{P}(\Sigma)$ is observable, there exist solutions $(\upsilon, u, y)$ and 
$(\upsilon', u, y')$ 
of $\mathscr{P}(\Sigma)$ such that $\upsilon(0)=\upsilon_0$ and 
$\upsilon'(0) = \upsilon_0'$, 
and $y \neq y'$. By Lemma \ref{Theoremi:main:lemma} and by 
\eqref{map_onto_reach_set}, there exist solutions $(s,u,y)$ and $(s',u,y')$ of 
$\Sigma$ 
with $s(0)=s_0$ and $s'(0)=s_0'$, satisfying 
\begin{equation} \label{equation_obs_solutions_proof_reach_obs}
\forall t \geqslant 0 \, , \quad F(s(t)) = \upsilon(t) \, , \quad \mbox{and} \quad F(s'(t)) 
= \upsilon'(t) \, , 
\end{equation}
and $y \neq y'$. Thus $\Sigma$ is observable, as desired. Now if 
$\mathscr{P}(\Sigma)$ is algebraically reachable, then it is observable, by 
\cite[Proposition 3]{Bartoszewicz}. Then $\Sigma$ is observable, 
by the above arguments.

\medskip

Finally,   we prove that, if $\mathscr{P}(\Sigma)$ is semi-algebraically observable, 
then $
\Sigma$ is weakly observable. By \cite[Proposition 4.20, Corollary 4.22]{NashSIAM}, 
the polynomial system $\mathscr{P}(\Sigma)$ is weakly observable. Let 
$s_0 \in \R^{2n}$ be an initial state of $\Sigma$, and let $\upsilon_0 = F(s_0)$. As 
$\mathscr{P}(\Sigma)$ is weakly observable, there exist a open set $V$ with 
$\upsilon_0 \in V$ such 
that, for all $\upsilon_0' \neq \upsilon_0 \in V$, there exist solutions $(\upsilon,u,y)
$ and $(\upsilon',u,y')$ satisfying $\upsilon(0)=\upsilon_0$, 
$\upsilon'(0)=\upsilon_0'$, and $y \neq y'$. We set $U = F^{-1}(V)$ which is an 
open set (because $F$ is a continuous map) 
such that $s_0 \in U$. Take $s_0' \neq s_0 \in U$, and set 
$\upsilon_0' = F(s_0') \in V$. 
By injectivity of $F$, $\upsilon_0' \neq \upsilon_0 \in V$. Thus we can find 
solutions $(\upsilon,u,y)$ and $(\upsilon',u,y')$ of $\mathscr{P}(\Sigma)$ as 
above. Then there exist 
solutions $(s,u,y)$ and $(s',u,y')$ of $\Sigma$ such that 
\eqref{equation_obs_solutions_proof_reach_obs} holds. We know that $y \neq y'$. 
Thus $\Sigma$ is weakly observable. 

\section{Examples}

Linear systems are a particular case of ODE-LSTMs, by taking $\sigma_1$ the identity map, 
$\sigma_2 = \sigma_4 = 0$ (the constant functions equal to $0$), $\sigma_3 = 1$ (the 
constant function equal to $1$), $b^1\in \mathbb{R}^n$ the trivial vector and $C \in 
\mathbb{R}^{p \times 2n}$ a matrix of the form $(\widetilde{C}, 0)$ with $\widetilde{C} \in 
\mathbb{R}^{p \times n}$. Also ODE-RNNs are particular cases of ODE-LSTMs by taking $\sigma_2,=
\sigma_4=0$, $\sigma_3=1$, $\sigma_1$ to be any non-constant continuous globally 
Lipschitz function, and taking $b_1 $ to be trivial.

\begin{Remark}\label{eqn:resnet}
For a suitable choice of $C$ and $U^0$,  and taking $\sigma_3 = \sigma_4 = \sigma_2$, the Euler discretization of an ODE-LSTM in $\F_2$ is given by
\begin{equation*}
\Sigma_{discretized}: \left\lbrace \begin{array}{lllll} 
x(k+1) = x(k) + f(k) \odot x(k) + i(k) \odot g^1(k) \\[2mm]
f(k) = \overrightarrow{\sigma_2} \big( \mathfrak{U}^2 h(k) + \mathfrak{W}^2 u(k) + b^2 \big)\\[2mm]
i(k) = \overrightarrow{\sigma_3} \big( \mathfrak{U}^3 h(k) + \mathfrak{W}^3 u(k) + b^3 \big)\\[2mm]
z(k+1) = z(k)+ \overrightarrow{\sigma_2} \big( \mathfrak{U}^4 h(k) + \mathfrak{W}^4 u(k) + b^4)\\[2mm]
h(k) = z(k) \odot \overrightarrow{\sigma_5}(x(k))\\[2mm]
x(0) = x_0 \, , \; z_0 = z(0)  \\[2mm]
y(k) = z(k). \end{array} \right.
\end{equation*}
This is closely related to LSTM networks defined in \cite{Gers}, where, at the 
$k$th step, $x(k), f(k), i(k)$ are usually 
called respectively the cell, the forget gate and the input gate
and $u(k), z(k)$ respectively the input and the output.  The presence of a skip 
connection makes this actually a \textit{residual} LSTM which, being the 
discretization of an ODE, enjoys of (gradient) stable dynamics, contrary to 
vanilla LSTMs. In addition, our construction should be readily applicable to 
LEM networks defined in \cite{rusch} which are also presented as discretized 
two-gated recurrent neural ODEs and to State-Space models \cite{gu2021combining, gu2022efficiently,goel2022sashimi}.
\end{Remark}

\medskip

\begin{Example}
Let us exhibit an ODE-LSTM whose 
polynomial embedding is minimal. Set $\mathcal{U} = \{u\}\subset \R$ and consider:
\begin{equation} \label{LSTM_example_1}
\Sigma ~ : ~ \left\lbrace \begin{array}{lllll}
\dot{x}(t) = \sigma(x(t) z(t) + u) x(t)\\
\dot{z}(t) = 0\\
x(0)=0 \, , z(0)=a \, , ~ \mbox{with} ~ a \neq 0\\
y(t) = x(t),
\end{array} \right. 
\end{equation}
where $\sigma$ is the sigmoid function. Here $\sigma_1 = \sigma_3 = \sigma_4 = 0$, 
$\sigma_5$ is the identity, $U^2 = 1$, $W^2 = 1$, $b^2$ is the zero vector and 
$C = (1, 0) \in \mathbb{R}^{1 \times 2}$. 
We can rewrite \eqref{LSTM_example_1} as:
\begin{equation}
\Sigma ~ : ~ \left\lbrace \begin{array}{lllll}
\dot{x}(t) = \sigma(a x(t) + u) x(t),\\
x(0)=0,\\
y(t) = x(t).
\end{array} \right. \, 
\end{equation}
Then, as $m(\sigma) = 2$, $\mathscr{P}(\Sigma)$ is given by
\begin{equation*}
\mathscr{P}(\Sigma) ~ : ~ \left\lbrace \begin{array}{lllll}
\dot{\upsilon}_{1,u} = a \upsilon_2 \upsilon_{1,u} (1 - \upsilon_{1,u}),\\
\dot{\upsilon_2} = \upsilon_{1,u} \upsilon_2,\\
\upsilon_{1,u}(0)=\sigma(u) \, , \, \upsilon_2(0)=0 \, ,\\
y(t) =h(\upsilon(t)) = \upsilon_2(t).
\end{array} \right. \, 
\end{equation*}
where $\upsilon(t) = (\upsilon_{1,u}(t),\upsilon_2(t))^T$ 
$\upsilon_{1,u}(t) = \sigma(ax(t) + u)$ and $\upsilon_2(t) = x(t)$, for $t \geqslant 0$.  
It is clear that $\upsilon_{1,u}$ and 
$\upsilon_2$ belong to the observation field $\mathcal{Q}_{obs}(\mathscr{P}(\Sigma))$.

Now we prove that $dim(\mathscr{P}(\Sigma)) =2$. Observe that
\begin{equation*}
\forall t \geqslant 0 \, , \quad \frac{\dot{\upsilon}_{1,u}(t)}{1 - \upsilon_{1,u}(t)} 
= a \dot{\upsilon_2}(t), 
\end{equation*}
and notice that $\upsilon_{1,u}(t) < 1$, because the sigmoid function takes value in $]0;1[$. 
By taking the primitives of both sides of the above equation, there is $c \in \mathbb{R}$ 
such that
\begin{equation*}
\forall t \geqslant 0 \, , \quad ln(1 - \upsilon_{1,u}(t)) = a \upsilon_2(t) + c \, ,
\end{equation*}
which implies that
\begin{equation*}
\forall t \geqslant 0 \, , \quad \upsilon_{1,u}(t) = 1 - K e^{a \upsilon_2(t)} \, , ~ \mbox{with} ~ K 
= e^c \, .
\end{equation*}
Thus, there is no non-trivial polynomial which vanishes on the set of reachable states of 
$\mathscr{P}(\Sigma)$, i.e. $\mathscr{P}(\Sigma)$ is algebraically reachable.  We conclude 
that $dim(\mathscr{P}(\Sigma)) = 2 = m(\sigma)$ which implies that $\Sigma$ is minimal.
\end{Example}

\medskip 

In what follows we set $\mathcal{U} = \{\alpha_1, \alpha_2\} \subset \mathbb{R}$ with 
$\alpha_1 \neq \alpha_2$.

\begin{Example}
Let us exhibit a reduction method of polynomial ODE-LSTM embeddings. Consider:
\begin{equation*} \label{LSTM_example_variables_rewritten}
\Sigma ~ : ~ \left\lbrace \begin{array}{lllll}
\dot{x}(t) = \sigma(h(t) + u(t)) x(t) + \sigma(h(t) + u(t)),\\
\dot{z}(t) = \sigma(h(t) + u(t)),\\
h(t) = x(t) z(t),\\
x(0)=0 \, , z(0)=0,\\
y(t) = z(t),
\end{array} \right. \, 
\end{equation*}
where we take $\sigma_1 = \sigma_2 = \sigma_4 = \sigma$ to be the sigmoid function, 
$\sigma_3=1$ and $\sigma_5$ the identity map. Here $n(\sigma)=2$, $k(\sigma) = 1$ so that 
$m(\sigma)=4$.  For $k=1,2$ and $t \geqslant 0$,  using Lemma \ref{Theoremi:main:lemma},  let us set
$\upsilon_{1,\alpha_k}(t) = \sigma_1(x(t) z(t) + \alpha_k) = \sigma(x(t) z(t) + \alpha_k) 
= \upsilon_{2,\alpha_k}(t) = \upsilon_{4, \alpha_k}(t)$, $\upsilon_{3,\alpha_k}(t) = 1$, and 
$\upsilon_5(t) = x(t)$.  Thus,  $\mathscr{P}(\Sigma)$ is simply given by
\begin{equation*}
 \left\lbrace \begin{array}{lllll}
\dot{\upsilon}_{1,\alpha_1} = (\upsilon_{1,u(t)} \upsilon_2 \upsilon_3 + 
\upsilon_{1,u(t)} \upsilon_3 + 
\upsilon_{1,u(t)} \upsilon_2) \upsilon_{1,\alpha_1} (1 - \upsilon_{1,\alpha_1}) \, ,\\
\dot{\upsilon}_{1,\alpha_2} = (\upsilon_{1,u(t)} \upsilon_2 \upsilon_3 + 
\upsilon_{1,u(t)} \upsilon_3 + 
\upsilon_{1,u(t)} \upsilon_2) \upsilon_{1,\alpha_2} (1 - \upsilon_{1,\alpha_2}) \, ,\\
\dot{\upsilon}_2 = \upsilon_{1,u(t)} \upsilon_2 + \upsilon_{1,u(t)} \, ,\\
\dot{\upsilon}_3 = \upsilon_{1,u(t)} \, , \\
\upsilon_{1,\alpha_k}(0)=\sigma(\alpha_k) , \, \upsilon_2(0) =
 \upsilon_3(0) = 0 \, , ~ \mbox{for} ~ k=1,2 \, ,\\
y(t) = \upsilon_3(t),
\end{array} \right. \, 
\end{equation*}
where we set $x(t)=\upsilon_2(t)$ and $z(t) = \upsilon_3(t)$.
\end{Example}

\begin{Example}
Let us now exhibit an accessible ODE-LSTM whose polynomial embedding is not accessible:
\begin{equation} \label{LSTM_counterexample_1}
\Sigma ~ : ~ \left\lbrace \begin{array}{lllll}
\dot{x}(t) = \sigma(h(t) + u(t))^2 \, ,\\
\dot{z}(t) = \sigma(h(t) + u(t)) \, ,\\
h(t) = x(t) z(t) \, , \\
s_0=(x(0),z(0))=(0,0)^T \, ,\\
y(t) = z(t).
\end{array} \right. \, 
\end{equation}
In this case, $n(\sigma)=2$. We first prove that $\Sigma$ is accessible.  Let 
$f_{\alpha_1}, f_{\alpha_2}: \mathbb{R}^2 \rightarrow \mathbb{R}^2$ be vector fields 
generated by $\Sigma$. We denote by $\mathcal{L}_{\Sigma}(s_0)$ the smallest Lie algebra 
containing $f_{\alpha_1}, f_{\alpha_2}$ and closed by Lie brackets.  We then have
\begin{equation*} \label{equation_LSTM_counterexample1}
\left\lbrace \begin{array}{ll}
\frac{1}{\sigma(\alpha_2)} f_{\alpha_2}(s_0) - \frac{1}{\sigma(\alpha_1)} f_{\alpha_1}(s_0) 
= (\sigma(\alpha_2)-\sigma(\alpha_1)) \; (1, 0)^T \in \mathbb{R}^2 \, , \\[2mm]
\frac{1}{\sigma(\alpha_2)^2} f_{\alpha_2}(s_0) - \frac{1}{\sigma(\alpha_1)^2} f_{\alpha_1}
(s_0) = \frac{\sigma(\alpha_1)-\sigma(\alpha_2)}{\sigma(\alpha_1) 
\sigma(\alpha_2)} \; (0, 1)^T \in \mathbb{R}^2.
\end{array} \right. 
\end{equation*}
As $\sigma(\alpha_1), \sigma(\alpha_2) > 0$ and $\sigma(\alpha_1) \neq \sigma(\alpha_2)$ 
because $\alpha_1 \neq \alpha_2$ and $\sigma$ is bijective and takes values in $]0;1[$, 
then $dim \mathcal{L}_{\Sigma}(s_0) = 2 = n(\sigma)$. 
By \cite[Theorem 3.10]{J-controllability}, $\Sigma$ is accessible. 
Now $\mathscr{P}(\Sigma)$ is given by
\begin{equation*}
 \left\lbrace \begin{array}{llllll}
\dot{\upsilon}_{1,\alpha_1} = (\upsilon_{1,u(t)}^2 \upsilon_3 + 
\upsilon_{1,u(t)} \upsilon_2) \upsilon_{1,\alpha_1} (1 - \upsilon_{1,\alpha_1}) \, ,\\
\dot{\upsilon}_{1,\alpha_2} = (\upsilon_{1,u(t)}^2 \upsilon_3 + 
\upsilon_{1,u(t)} \upsilon_2) \upsilon_{1,\alpha_2} (1 - \upsilon_{1,\alpha_2}) \, ,\\
\dot{\upsilon_2} = \upsilon_{1,u(t)}^2 \, ,\\
\dot{\upsilon_3} = \upsilon_{1,u(t)} \, ,\\ 
\upsilon_{1,\alpha_k}(0)=\sigma(\alpha_k) , \, \upsilon_2(0) =
 \upsilon_3(0) = 0 \, , ~ \mbox{for} ~ k=1,2 \, ,\\
y(t) = \upsilon_3(t),
\end{array} \right. \, 
\end{equation*}
where, for $t \geqslant 0,\alpha \in \mathcal{U}$,  we set  
$\upsilon_{1,\alpha}(t) = \sigma(x(t)z(t) + \alpha)$, $\upsilon_2(t) = x(t)$ and 
$\upsilon_3(t) = z(t)$. We denote
$\upsilon_0 = (\sigma(\alpha_1),\sigma(\alpha_2),0,0)^T \in \mathbb{R}^4$ the initial 
state of $\mathscr{P}(\Sigma)$ and let
$g_{\alpha_1}, g_{\alpha_2}: \mathbb{R}^4 \rightarrow \mathbb{R}^4$ be vector fields 
generated by the polynomial system $\mathscr{P}(\Sigma)$.  We denote
$\mathcal{L}_{\mathscr{P}(\Sigma)}(\upsilon_0)$ the smallest Lie algebra 
containing $g_{\alpha_1}, g_{\alpha_2}$ and closed by Lie brackets. It is easy to prove that 
$(g_{\alpha_1}(\upsilon_0), g_{\alpha_2}(\upsilon_0))$ is linearly independent, so that 
$2 \leqslant dim \mathcal{L}_{\mathscr{P}(\Sigma)}(\upsilon_0)$.
\end{Example}

\begin{Example}
Let us exhibit an ODE-LSTM $\Sigma$ which, seen as an analytic system, is both accessible and 
weakly observable and thus is minimal:
\begin{equation} \label{LSTM_example_2}
\Sigma ~ : ~ \left\lbrace \begin{array}{lllll}
\dot{x}(t) = \sigma(h(t) + u(t)) x(t) + \sigma(h(t) + u(t))^2,\\
\dot{z}(t) = \sigma(h(t) + u(t)),\\
h(t) = x(t) z(t),\\
x(0)=0 \, , z(0)=0,\\
y(t) = z(t),
\end{array} \right. \, 
\end{equation}
where $\sigma_1 = \sigma_2 = \sigma_3 = \sigma_4 =\sigma$ is the sigmoid function and 
$\sigma_5$ the identity map. Here $n(\sigma) = 2$ and $\mathscr{P}(\Sigma)$ is given 
by:
\begin{equation*}
\left\lbrace \begin{array}{llllll}
\dot{\upsilon}_{1,\alpha_1} = (\upsilon_{1,u(t)} \upsilon_2 \upsilon_{3} + 
\upsilon_{1,u(t)}^2 \upsilon_3 + 
\upsilon_{1,u(t)} \upsilon_2) \upsilon_{1,\alpha_1} (1 - \upsilon_{1,\alpha_1}) \, ,\\
\dot{\upsilon}_{1,\alpha_2} = (\upsilon_{1,u(t)} \upsilon_2 \upsilon_{3} +
 \upsilon_{1,u(t)}^2 \upsilon_3 + 
 \upsilon_{1,u(t)} \upsilon_2) \upsilon_{1,\alpha_2} (1 - \upsilon_{1,\alpha_2}) \, ,\\
\dot{\upsilon_2} = \upsilon_{1,u(t)} \upsilon_2 + \upsilon_{1,u(t)}^2 \, ,\\
\dot{\upsilon_3} = \upsilon_{1,u(t)} \, ,\\ 
\upsilon_{1,\alpha_1}(0) = \sigma(\alpha_1) \, , \upsilon_{1,\alpha_2}(0) 
= \sigma(\alpha_2) \, , \upsilon_2(0)=\upsilon_3(0)=0 \, ,\\
y(t) = \upsilon_3(t).
\end{array} \right. \, 
\end{equation*}
Now, for $t \geqslant 0$,  set 
$\upsilon_{1,\alpha_k}(t) = \sigma(x(t) z(t) + \alpha_k)$, where $k=1,2$, and set 
$\upsilon_2(t) = x(t)$ and $\upsilon_3(t) = z(t)$.\\
Notice that equations \eqref{LSTM_counterexample_1} hold in this case, so 
$dim \mathcal{L}_{\Sigma}(s_0) = 2 = n(\sigma)$.  Then, by 
\cite[Theorem 3.10]{J-controllability},  we conclude that $\Sigma$ is accessible (i.e. its 
reachable set from $s_0 = (0,0)^T$ contains a non-empty open set).

Now let's prove that $\Sigma$ is weakly observable by proving that 
$\mathscr{P}(\Sigma)$ is semi-algebraically observable.  Denote 
$g_{\alpha_1}, g_{\alpha_2}: \mathbb{R}^4 \rightarrow \mathbb{R}^4$ 
the vector fields generated by $\mathscr{P}(\Sigma)$, and 
$L_{g_{\alpha}}$ the Lie derivative operator along $g_{\alpha}$, 
for $\alpha \in \mathcal{U}$. The output map of 
$\mathscr{P}(\Sigma)$ is $h = \upsilon_3$.  It is then clear that $\upsilon_3$ belongs to 
the observation algebra of $\mathscr{P}(\Sigma)$. Moreover we have
\begin{align*}
L_{g_{\alpha_1}} h = \upsilon_{1,\alpha_1} \, , \quad L_{g_{\alpha_2}} h 
= \upsilon_{1,\alpha_2} \, , 
\end{align*}
which shows that $\upsilon_{1,\alpha_1}, \upsilon_{1,\alpha_2}$ also belong to the 
observation algebra $\mathcal{A}_{obs}(\mathscr{P}(\Sigma))$ of $\mathscr{P}(\Sigma)$ 
as the latter algebra is closed under Lie 
derivatives along $g_{\alpha_1}, g_{\alpha_2}$. Thus, in 
$\mathcal{Q}_{obs}(\mathscr{P}(\Sigma))$ we have
\begin{equation*}
\upsilon_2 = \frac{\frac{L_{g_{\alpha_2}} L_{g_{\alpha_1}} h}{L_{g_{\alpha_1} 
h (1 - L_{g_{\alpha_1}})}} - h (L_{g_{\alpha_2}} h)^2}{h (L_{g_{\alpha_2}} h) 
+ L_{g_{\alpha_2}} h}\, .
\end{equation*}
This proves that $\mathscr{P}(\Sigma)$ is semi-algebraically observable. Thus $\Sigma$ is 
weakly observable by Lemma \ref{RNN_ReachObs}. As $\Sigma$ is seen as an 
analytic system and is accessible and weakly observable,  it is then minimal.
\end{Example}

\section{Conclusions and perspectives}

We have shown that i-o maps realized by large classes of recurrent neural ODEs 
(namely  ODE-RNNs and ODE-LSTMs) can be represented by polynomial systems, and we 
used this fact to derive necessary and sufficient conditions for the
existence of realizations by such systems and their minimality.  
Future research will be directed towards improving these results to derive 
a complete realization theory for ODE-LSTMs and apply them to formulating theoretical
guarantees for learning ODE-LSTMs. 

\section*{Acknowledgments} The first author thanks Pierre Marion for useful comments which helped improving the clarity of the exposition. This work has been supported by the French government under the "France 2030” program, as part of the SystemX Technological Research Institute.


\begin{thebibliography}{32}

\bibitem{ADPa} F. Albertini, P. Dai Pra, 
Forward accessibility for recurrent neural networks, 
\textit{IEEE Transactions on Automatic Control}, 1995, pages 1962-1968, volume 40. 

\balance

\bibitem{ASm} F. Albertini, E.D. Sontag, 
For Neural Networks, Function Determines Form, 
\textit{Neural Networks} 6(1993): 975-990.

\bibitem{ASo} F. Albertini and E.D. Sontag, 
State observability in recurrent neural networks, 
\textit{Systems f3 Control Letters} 22(1994): 235-244.

\bibitem{deq} S. Bai, J. Z. Kolter, V. Koltun,
Deep equilibrium models. 
\textit{Advances in Neural Information Processing Systems}, 32, 2019.

\bibitem{Bartoszewicz} Z. Bartoszewicz, 
Minimal polynomial realizations, 
\textit{Mathematics of control, signals, and systems}, 1988, pages 227-237, volume 1.

\bibitem{Bartoszewicz1} Z. Bartoszewicz, 
Rational systems and observation fields, 
\textit{Systems \& Control Letters 9}, 1987,  pages 379-386.

\bibitem{RNNsproblems2} Y. Bengio, P. Simard, and P. Frasconi, 
Learning long-term dependencies with gradient descent is difficult, 
\textit{IEEE Transactions on neural networks}, \textbf{5}(2), pages 157-166, 1994.

\bibitem{HomStability2} S.  P.  Bhat  and  D.  S.  Bernstein,  
Geometric  homogeneity  with  applications  to  finite time  stability,
\textit{Mathematics  of  Control,  Signals  and Systems}, vol. 17, pp. 101 -- 127, 2005.

\bibitem{bochnak:coste:roy:1998} J. Bochnak, M. Coste, and M.-F. Roy, 
\textit{Real algebraic geometry}, 
Springer-Verlag, Berlin Heidelberg, 1998.

\bibitem{MachineLearningBook2} A. Caterini and D. E. Chang, 
Deep Neural Networks in a Mathematical Framework, 
Springer, \textit{SpringerBriefs in Computer Science}, 2018.

\bibitem{chen1}
R.T. Chen,  Y. Rubanova,  J. Bettencourt and D. Duvenaud, 
\textit{Neural ordinary differential equations}. 
Advances in neural information processing systems, 31, 2018.

\bibitem{cox:little:oshea:1992} D. Cox, J. Little, and D. O’Shea, 
\textit{Ideals, varieties, and algorithms: An introduction to computational 
algebraic geometry and commutative algebra}, 
Springer, third edition, 2007.

\bibitem{AIR} P. D’Alessandro, A. Isidori and A. Ruberti,
Realization and structure theory of bilinear dynamical systems. 
\textit{SIAM J. Control and Optimization}, 517–535, 1974.

\bibitem{TMArxive} T. Defourneau, M. Petreczky, 
Realization theory of recurrent neural networks and rational systems, 
IEEE 58th Conference on Decision and Control (CDC), 2019.

\bibitem{Biau} 
A. Fermanian, P. Marion, J.-P. Vert and G. Biau,
\textit{Framing RNN as a kernel method: A neural ODE approach}
Advances in Neural Information Processing Systems 34, 2021.

\bibitem{Gers2} F.A. Gers, N. Schraudolph, and J. Schmidhuber, 
Learning precise timing with LSTM recurrent networks, 
\textit{Journal of Machine Learning Research}, \textbf{3}, pages 115-143, 2002.

\bibitem{Gers1} F.A. Gers, and J. Schmidhuber, 
LSTM recurrent networks learn simple context free and context sensitive languages, 
\textit{IEEE Transactions on Neural Networks}, \textbf{12}(6), pages 1333-1340, 2001.

\bibitem{Gers} F.A. Gers, and J. Schmidhuber, 
Recurrent nets that time and count, 
\textit{Proceedings of the IEEE-INNS-ENNS International Joint Conference on Neural Networks. 
IJCNN 2000. Neural Computing: New Challenges and 
Perspectives for the New Millennium}, volume 3, 2000.

\bibitem{goel2022sashimi}
Goel, Karan and Gu, Albert and Donahue, Chris and R{\'e}, Christopher
\textit{It's Raw! Audio Generation with State-Space Models},
International Conference on Machine Learning (ICML), 2022.


\bibitem{gu2022efficiently}
Gu, Albert and Goel, Karan and R\'e, Christopher,
  \textit{Efficiently Modeling Long Sequences with Structured State Spaces}, ICLR, 2022.

\bibitem{gu2021combining}
Gu, Albert and Johnson, Isys and Goel, Karan and Saab, Khaled and Dao, Tri and Rudra, Atri and R{\'e}, Christopher,
  \textit{Combining Recurrent, Convolutional, and Continuous-time Models with Linear State-Space Layers},
Advances in neural information processing systems, 34, 2021.



\bibitem{HRS} J.~Hanson, M.~Raginsky, E.~Sontag
\textit{Learning Recurrent Neural Net Models of Nonlinear Systems},
Proceedings of the 3rd Conference on Learning for Dynamics and Control, PMLR 144, 2021.

\bibitem{HeZhang} 
K.  He, X.Zhang, S.  Ren and J.  Sun,  
Deep residual learning for image recognition. 
In Proceedings of the IEEE conference on computer vision and pattern recognition, 2016.

\bibitem{NLCO} R. Hermann, and A. J. Krener, 
Nonlinear controllability and observability, 
\textit{IEEE Transactions on automatic control}, 1977, pages 728-740, volume 22.

\bibitem{RNNsproblems1} S. Hochreiter, 
\textit{Untersuchungen zu dynamischen neuronalen Netzen}, 
Institut für Informatik Technische Universität München, Germany, Ph.D thesis under the 
direction of W. Brauer, 1991.

\bibitem{LSTMorigine} S. Hochreiter, and J. Schmidhuber, 
Long short term memory, 
\textit{Neural Computation}, \textbf{9}(8), pages 1735-1780, 1997.

\bibitem{Isidori} A. Isidori, 
\textit{Nonlinear control systems}, 
Springer, third edition, 2013.

\bibitem{J-controllability} B. Jakubczyk, 
\textit{Introduction to geometric nonlinear control ; controllability and Lie bracket}, 
Lectures given at the Summer School on Mathematical Control Theory, Trieste 3-28 
Septembre 2001, International Atomic Energy Agency (IAEA), Volumn 38, 2002.

\bibitem{J} B. Jakubczyk, 
Realization theory for nonlinear systems: three approaches. 
In M. Fliess and M. Hazenwinkel, editors, \textit{Algebraic and Geometric Methods in 
Nonlinear Control Theory}, pages 3–31. D. Reidel Publishing Company, Dordrecht, 1986.

\bibitem{kailath} T. Kailath, 
\textit{Linear Systems}, 
Prentice-Hall, New Jersey, 1979, ISBN 978-0-13-536961-6.

\bibitem{Kalman} R.E. Kalman, 
Mathematical description of linear dynamical systems. 
\textit{SIAM J. Control and Optimization}, 1(2): 152–159, 1963.

\bibitem{KalPart} R.E. Kalman, 
\textit{On minimal partial realization of a linear input-output map}, 
Topics in Mathematical Systems Theory, 1969.

\bibitem{Kidg1}
P. Kidger, J. Morrill, J. Foster and T. Lyons, 
\textit{Neural Controlled Differential Equations for Irregular Time Series},
Advances in Neural Information Processing Systems, 33,  2020.


\bibitem{kunz:1985} E. Kunz, 
\textit{Introduction to commutative algebra and algebraic geometry,}
Birkhäuser, Boston,1985.

\bibitem{pde}
Z. Li, N. B. Kovachki, K. Azizzadenesheli, B. Liu, K. Bhattacharya, A. Stuart and A. Anandkumar
\textit{Fourier Neural Operator for Parametric Partial Differential Equations}. 
International Conference on Learning Representations, 2021.


\bibitem{LindquistBook} A.~Lindquist and G.~Picci, 
\textit{Linear Stochastic Systems}, 
Series in Contemporary Mathematics.\hskip 1em plus 0.5em minus 0.4em\relax 
Springer-Verlag Berlin Heidelberg, 2015, vol.~1.

\bibitem{LjungBook} L. Ljung, 
\textit{System identification: theory for the user (second edition)}, 
Precice-Hall, Englewood Cliffs, NJ, 1999.

\bibitem{Jana} J. N\v{e}mcov\'a, 
\textit{Rational Systems in Control and System Theory}, 
Centrum Wiskunde \& Informatica (CWI), 
Amsterdam, Ph.D. thesis under the direction of Jan H. van Schuppen, 2009.

\bibitem{NashSIAM} J. N\v{e}mcov\'a, M. Petreczky and  J.H. van Schuppen, 
\textit{Realization theory of Nash systems, }
SIAM J. Control \& Optimization, 2013, pages 3386-3414, volume 51.

\bibitem{JanaCDC2016} J. N\v{e}mcov\'a, M. Petreczky, J. H. van Schuppen, 
Observability reduction algorithm for rational systems, 
\textit{IEEE Conference on Decision and Control (CDC)}, 2016, pages 5738-5743.

\bibitem{JanaSIAM} J. N\v{e}mcov\'a, and  J.H. van Schuppen, 
Realization theory for rational systems: The existence of rational realizations, 
\textit{SIAM J. Control Optim.}, 2009, pages 2840-2856, volume 48.


\bibitem{JanaVS} J. N\v{e}mcov\'a, and  J.H. van Schuppen, 
Realization theory for rational systems: Minimal rational realizations, 
\textit{Acta Applicandae Mathematica}, 2010, pages 605-626, volume 110.



\bibitem{sdeq}
A. Pal, A. Edelman and Ch. Rackauckas,
\textit{Mixing Implicit and Explicit Deep Learning with Skip DEQs and Infinite Time Neural ODEs (Continuous DEQs)},
arXiv preprint arXiv:2201.12240, 2022.


\bibitem{QSc} Y. Qiao, E.D. Sontag, 
Further results on controllability of recurrent neural networks, 
\textit{Systems \& Control Letters 36}, 1999, pages 121-129.

\bibitem{HomStability1} L. Rosier,  
Homogeneous Lyapunov function for homogeneous continuous vector  field, 
\textit{Systems  \&  Control  Letters}, vol.  19,  pp.  467 - 473, 1992.

\bibitem{ruba}
Y.  Rubanova, R.T.Q.  Chen, D.K. Duvenaud,
\textit{Latent ordinary differential equations for irregularly-sampled time series}
Advances in neural information processing systems 32, 2019.

\bibitem{rusch}
T. K. Rusch, S. Mishra, N. B. Erichson and M. W. Mahoney,
\textit{Long Expressive Memory for Sequence Modeling},
International Conference on Learning Representations, 2022.

\bibitem{SontagWang} Y. Wang and E.D. Sontag, 
{\it Algebraic differential equations and rational control systems}, 
SIAM J. Control Optim., 30(5):1126–1149, 1992.

\bibitem{SontagWang2}
E.D. Sontag, Y. Wang, and A. Megretski, 
Input classes for identification of bilinear systems,
\textit{IEEE Transactions Autom. Control}, 2009, pages 195–207, volume 54.

\bibitem{MachineLearningBook1} I. Sustkever, 
Training Recurrent Neural Networks, 
PhD thesis, University of Toronto, 2013. 

\bibitem{sde}
B. Tzen, M. Raginsky,
Neural stochastic differential equations: Deep latent gaussian models in the diffusion limit. 
arXiv preprint arXiv:1905.09883, 2019.

\bibitem{Hof} J.M. Van den Hof, 
System theory and system identification of compartmental systems, 
PhD thesis, University of Groningen, 1996. 

\bibitem{bsde}
W. Xu, R.T. Chen,  X. Li and D. Duvenaud, 
\textit{Infinitely Deep Bayesian Neural Networks with Stochastic Differential Equations}. 
International Conference on Artificial Intelligence and Statistics, 2022.

\bibitem{zariski:samuel:1958} O. Zariski and P. Samuel, 
\textit{Commutative algebra I, II.}, 
Springer, 1958.

\end{thebibliography}
\end{document}